\documentclass[12pt]{amsart}
\usepackage{fullpage, times}
\usepackage{amscd,amssymb,amsmath}
\usepackage{graphicx}
\usepackage{color} 

\headsep15pt

\newtheorem{Proposition}{Proposition}

\newtheorem{Lemma}{Lemma}
\newtheorem{Theorem}{Theorem}

\newtheorem{defn}{Definition}

\newcommand{\proj}{\mathbb{P}}
\newcommand{\pone}{\proj^1}
\newcommand{\ponepone}{\proj^1 \times \proj^1}
\newcommand{\Z}{\mathbb{Z}}

\newcommand{\oh}{{\mathcal{O}}}
\newcommand{\com}{\mathbb{C}}
\newcommand{\Q}{\mathbb{Q}}

\newcommand{\MM}{\mathsf{M}}

\newcommand{\uuu}{{\mathfrak{1}}}
\newcommand{\ppp}{{\mathfrak{p}}}

\newcommand{\lv}{\left |}

\newcommand{\lang}{\left\langle}
\newcommand{\rang}{\right\rangle}

\newcommand{\combinatfactor}{\mathfrak{z}}
\newcommand{\zz}{{\combinatfactor}}

\newcommand{\cF}{\mathcal{F}}
\newcommand{\vac}{v_\emptyset}
\DeclareMathOperator{\Aut}{Aut}

\def\rt{\sqrt{Q}}
%elliptic section

\def\rootQ{\sqrt{Q}}
\def\rootQ2{\sqrt{Q_2}}
\def\sl2{\mathfrak{sl}_2}

\def\H{\mathcal{H}}

\pagestyle{headings}

\begin{document}

\title{A Fock Space Approach to Severi Degrees}
\author{Y. Cooper and R. Pandharipande}
%\date{October 2016}
\maketitle

\begin{abstract}
The classical Severi degree counts the number of algebraic curves of fixed genus and class passing through points in a surface.  We express the Severi degrees
of $\proj^1 \times \proj^1$ as matrix elements of the exponential of
a single  operator
$\mathsf{M}_S$
on Fock space.
The formalism puts Severi degrees
on a similar footing as the more developed study of Hurwitz numbers of coverings of curves. The pure genus 1 invariants of the product $E \times \proj^1$
(with $E$ an elliptic curve) are solved via an exact formula for the eigenvalues of $\mathsf{M}_S$
to initial order.
The Severi degrees of $\proj^2$ are also determined by
$\mathsf{M}_S$ via the $\frac{(-1)^{d-1}}{d^2}$ disk multiple cover formula for
Calabi-Yau 3-fold geometries.
\end{abstract}

\section{Introduction}
Let $\proj^2$ be the complex projective plane.
The study of the following  classical problem in enumerative geometry
was initiated in the late $19^{th}$ century by Chasles, Zeuthen, and
Schubert:

\vspace{8pt}
\noindent  {\em How many
algebraic curves in $\proj^2$ of geometric genus $g$ and degree $d$ pass
through $3d+g-1$ general points?}
\vspace{8pt}

\noindent More precisely, plane curves $C\subset \proj^2$ of degree $d$ are parameterized by the projective space $\proj( H^0(\proj^2, \oh_{\proj^2}(d)))$ of dimension $\binom{d+2}{2}-1$.  Let
$r \leq \binom{d-1}{2}$
be a non-negative integer.
The {\em Severi variety},
\begin{equation}\label{sevd}
S^r_d \subset \proj( H^0(\proj^2, \oh_{\proj^2}(d))),
\end{equation}
parameterizing reduced irreducible curves with exactly $r$
nodes (and no other singularities),
is a nonsingular quasi-projective subvariety of codimension $r$, see \cite{ACGH,HM}.  The degree of $S^r_d$ in the embedding \eqref{sevd} is the classical {\em Severi degree} $N^r_d$.

The Severi degree $N^r_d$ can be interpreted as the number of degree $d$ reduced irreducible plane curves with exactly $r$ nodes passing through $$n = \binom{d+2}{2}-1-r$$ general points of $\proj^2$.  Such curves have geometric genus $g=\binom{d-1}{2}-r$. Severi degrees can also be indexed by geometric genus
$$N_{g,d}= N^r_d$$
and be interpreted as the numbers of degree $d$ reduced irreducible plane curves of geometric genus $g$ passing through $n = 3d+g-1$ general points of $\proj^2$.

The modern study of the Severi variety began with the proof of the irreducibility of $S^r_d$ by J. Harris \cite{H}
using the geometry of stable curves in an essential way --- foreshadowing the stable map techiniques which
would be applied years later.
The Severi degrees $N_{g,d}$ of $\proj^2$ were calculated by degeneration techniques
by Z. Ran  \cite{Z}. An elegant treatment
was  given by L. Caporaso and J. Harris \cite{CH}. There are several more recent solutions in Gromov-Witten theory via localization \cite{GP}, relative invariants \cite{IP,LR,L}, and Virasoro constraints \cite{Ga}. Tropical counts also calculate Severi degrees \cite{Mk}.

With all of these methods available, what is there left to ask?  For comparison, consider the enumeration of Hurwitz covers of curves. A very useful approach to Hurwitz counts is via  the matrix elements in Fock space of the exponential of the
operator $\MM_{H}$ associated to the insertion of a 2-cycle.  Since $\MM_H$  is diagonalized by Schur functions, a fundamental connection to the representation theory of the symmetric group is made. The formalism also incorporates, in a basic way, the cut-and-join structure of Hurwitz numbers.  Fock space methods play a crucial role in the study of the associated Toda hierarchy \cite{O,OP,P}. Is there a natural Fock space formalism for Severi degrees?

A direct approach to Severi degrees via the matrix elements in Fock space of a new operator $\MM_S$ is presented here. While the surface $\proj^1 \times \proj^1$ is the most natural to study, the formalism simultaneously captures the Severi degrees of $\proj^2$ blown-up in at most 3 points.  Severi degrees for elliptic fibrations such as $E\times \proj^1$ occur via the trace.  We hope the results presented here will lead to further study of $\mathsf{M}_S$ (and the full algebra of operators discussed in Section \ref{fll4}).

One avenue for further study is the relationship between this approach for curve counting and tropical geometry.  In \cite{BG}, Block and G\"ottsche explore connections between this approach to Severi degrees and tropical curve counting techniques, such as those of Brugall\'e and Mikhalkin \cite{BM}.  In doing so, they extend curve counting via Fock space to a larger class of toric surfaces, those described by an h-transverse lattice polygon.  Furthermore, they extend consideration from Severi degrees to the refined Severi degrees defined and studied by G\"ottsche and Shende and by Block and G\"ottsche.

\section{Hurwitz theory}

\subsection{Fock space}
We start by reviewing the Fock space formalism for the Hurwitz theory of
covers of curves. The results provide direct motivation for our treatment of Severi degrees.

We begin by recalling the definition of the Heisenberg algebra $\H$.  $\H$ is an infinite dimensional Lie algebra.  A basis of $\H$ is given by the operators $1$ and $\{ \alpha_k \}$, where $k \in \Z \setminus \{0\}$. The Lie bracket is given by
\begin{equation} \label{Hcommutation}
\left[\alpha_k,\alpha_l\right] = k \, \delta_{k+l,0}\,.
\end{equation}
$\H$ has an infinite dimensional irreducible representation $\cF$, called the Fock space representation.  $\cF$ contains a distinguished vector called the vacuum vector $\vac$.  As a vector space $\cF$ is freely generated over $\Q$ by the elements
$$\alpha_{-k_1}...\alpha_{-k_n} \vac, \ \ k_i\in\Z_{>0},$$
where by the commutation relations, the order of $\alpha_{-k_1}...\alpha_{-k_n}$ does not matter.

The Lie algebra $\H$ acts on $\cF$ in the following way.  The creation operators
$$\alpha_{-k}, k \in \Z_{>0},$$
act via
$$ \alpha_{-k} (\alpha_{-k_1}...\alpha_{-k_n} \vac) = \alpha_{-k} \alpha_{-k_1}...\alpha_{-k_n} \vac.$$
The annihilation operators
$$\alpha_{k},\ \  k\in\Z_{>0},$$ kill the vacuum,
$$
\alpha_k \cdot \vac =0,\quad k>0 \,,
$$
and their action on all other vectors is determined by the commutation relations (\ref{Hcommutation}).  For example,
$$ \alpha_1 ( \alpha_{-2} \vac ) = \alpha_1 \alpha_{-2} \vac = \alpha_{-2}  \alpha_1 \vac = 0$$
while
$$ \alpha_1 ( \alpha_{-1} \vac ) = \alpha_1 \alpha_{-1} \vac = ([\alpha_1, \alpha_{-1}] + \alpha_{-1} \alpha_1) \vac = \vac.$$

Given a partition $\mu = \mu_1 + ... + \mu_r$, let $Aut(\mu)$ denote the subgroup of $S_r$ which leaves the partition unchanged.  For example, if $\mu = 2 + 3 + 3 + 5 + 5 + 5$, $Aut(\mu) = S_2 \times S_3$.

Let $\zz(\mu)$ denote the combinatorial factor
$$\zz(\mu)= |\Aut(\mu)|\cdot \prod_{i=1}^{\ell(\mu)} \mu_i \ \ .$$
We will work with the natural basis of $\cF$ given by
the vectors
\begin{equation*}
%  \label{basis}
  \lv \mu \rang = \frac{1}{\zz(\mu)} \, \prod_{i=1}^{\ell (\mu )} \alpha_{-\mu_i} \, \vac \,
\end{equation*}
indexed by partitions
$\mu$.
An inner product is defined on $\cF$ by
\begin{equation}
  \label{inner_prod1}
  \lang \mu | \nu \rang =
\frac{u^{-\ell(\mu)}}{\zz(\mu)}\  {\delta_{\mu\nu}} \,.
\end{equation}
The variable $u$ will play the role of the genus parameter.

The operator in Hurwitz theory corresponding to the insertion
of a $2$-cycle is well-known to be given by
\begin{equation}
\label{hur}
\MM_{H}(u) =
\frac12 \sum_{k,l>0}
\Big[ u\  \alpha_{k+l} \, \alpha_{-k} \, \alpha_{-l} +
 \alpha_{-k-l}\,  \alpha_{k} \, \alpha_{l} \Big] \,.
\end{equation}
The operator $\MM_{H}$ is self-adjoint with respect
to the inner product \eqref{inner_prod1}.

\subsection{Hurwitz numbers}

The {\em Hurwitz number} $H_{g,d}^\bullet$ is the automorphism
weighted count of genus $g$, degree $d$ covers of
$\proj^1$ with simple ramification over $2d+2g-2$
fixed points of $\proj^1$. The superscript $\bullet$
indicates the domain of the cover is {\em not} required
to be connected.
If $2d+2g-2 <0$, then $H_{g,d}^\bullet$ vanishes by definition.

The partition function for the Hurwitz numbers of $\proj^1$ is
$$\mathsf{Z}^{\proj^1}=1+\sum_{g\in \mathbb{Z}} u^{g-1}
\sum_{d>0} \  H_{g,d}^{\bullet} \frac{t^{2d+2g-2}}{(2d+2g-2)!} \ Q^d.
$$
Define the vector $\mathsf{v}\in \cF$ by
$$\mathsf{v}= \sum_{d \geq 0} |\ (1^{d})\ \rangle\ .$$
The partition function is expressed in terms
of matrix elements by the formula
$$\mathsf{Z}^{\proj^1} =
\big\langle\ \mathsf{v} \ | \ Q^{|\cdot|}
\exp\big(t\mathsf{M}_H(u)\big) \
\ | \ \mathsf{v}\ \big\rangle.$$
Here, $|\cdot|$ denotes the energy operator with eigenvalue
$|\mu|$ on the basis vector $|\mu\rangle$.
The simplicity of Hurwitz theory is largely due to the
diagonal form of $\MM_{H}$ in the basis of Schur functions.

The partition function $\mathsf{Z}^E$ for Hurwitz covers of
an elliptic curve $E$ is well-known to be given by the
trace,
$$\mathsf{Z}^{E} = \text{tr}\big( Q^{|\cdot|}
\exp\big(t\mathsf{M}_H(u)\big) ).$$
For example, the genus 1 part is given by the $u=0$
specialization,
\begin{equation}\label{b56}
\mathsf{Z}^{E}_1 = \text{tr}\big( Q^{|\cdot|}
\exp\big(t\mathsf{M}_H(0)\big)\big) =
\sum_{\mu} Q^{|\mu|}\
.
\end{equation}
Since $\MM_H(0)$ is nilpotent, all the eigenvalues
of $\MM_H(0)$ vanish and the second equality is
obtained. The sum on the right is over all partitions $\mu$.

\section{Severi theory}
Let $S$ be a nonsingular projective surface.  The moduli space of stable maps
$$\overline{M}_{g,n}^\bullet(S,\beta)$$
from genus $g$, $n$-pointed curves to $S$ representing the class $\beta\in H_2(S,\mathbb{Z})$ has virtual dimension
$$\text{dim}_{\com}\ [\overline{M}^\bullet_{g,n}(S,\beta)]^{vir} \ =\ \int_\beta c_1(S)+ g-1 +n.$$
The superscript $\bullet$ indicates the domain is possibly disconnected, but with no connected components collapsed to points of $S$.
Let
$$\text{ev}_i: \overline{M}^\bullet_{g,n}(S,\beta) \rightarrow S$$
be the evaluation at the $i^{th}$ marked point.
We refer the reader to \cite{FP,KM} for an introduction to stable maps and Gromov-Witten theory.

A Gromov-Witten analogue of the Severi degree is defined by the following
construction. Let
$$n= \int_\beta c_1(S)+g-1$$
be the virtual dimension of the unpointed space $\overline{M}^\bullet_{g}(S,\beta)$.
Let
$$N^{\bullet}_{g,\beta} = \int_{[\overline{M}^\bullet_{g,n}(S,\beta)]^{vir}} \prod_{i=1}^{n} \text{ev}_i^*(\ppp),$$
where $\ppp\in H^4(S,\mathbb{Z})$ is the point class.  If $n<0$, then $N_{g,\beta}^\bullet$ vanishes by definition.

For an arbitrary surface $S$, the Gromov-Witten invariant $N^{\bullet}_{g,\beta}$ may be completely unrelated to the classical Severi degree. Indeed, for Enriques surfaces, the Gromov-Witten invariants are often fractional, and, for $K3$ surfaces, the Gromov-Witten invariants vanish altogether. However, for the surfaces $\proj^2$ and $\proj^1\times \proj^1$ of main interest here, $N^{\bullet}_{g,\beta}$ is well-known to coincide with the (disconnected) classical Severi degree.  In genus 0, the Gromov-Witten invariants of
 $\proj^2$ and $\proj^1\times \proj^1$ are enumerative by 
the convexity of the surfaces \cite{FP}.  In fact,
the stationary Gromov-Witten invariants in all positive genera 
are also enumerative for both surfaces.  While 
excess components in the moduli spaces of stable maps to these surfaces appear
in positive genus, the point conditions can not be satisfied on 
the excess components \cite{GKP}.

When we discuss Severi degrees for a surface, we will {\em always} mean the corresponding Gromov-Witten invariants $N^{\bullet}_{g,\beta}$.
The Severi degrees of  $\proj^1\times \proj^1$ will play a special role.

\section{Fock Space}
To start, we write the cohomology of $\proj^1$ as  the standard direct sum
$$H^*(\proj^1,\Q) = \Q \cdot \uuu \ \oplus\ \Q \cdot \ppp\ $$
where $\uuu$ and $\ppp$ are the unit and point classes respectively.

The Lie algebra $\H [ \proj^1]$ is generated by the operators $1,$ $\{ \alpha_k [\uuu]\},$ and $\{ \alpha_k [\ppp]\}$, where $k \in \Z \setminus \{0\}$. The Lie bracket is given by
\begin{eqnarray} \label{Hp1commutation}
\left[\alpha_k[\uuu],\alpha_l[\ppp]\right] &= &k \, \delta_{k+l,0}\,
%\left[\alpha_k[\ppp],\alpha_l[\uuu]\right] &= &k \, \delta_{k+l,0}\, ,
\end{eqnarray}
with all other commutators vanishing.

The Fock space $\cF[\proj^1]$ is freely generated over $\Q$ by the elements
$$\alpha_{-k_1}[\uuu]...\alpha_{-k_n}[\uuu] \alpha_{-\ell_1} [\ppp] ... \alpha_{-\ell_m} [\ppp] \vac, \ \ k_i, \ell_j \in\Z_{>0},$$
where by the commutation relations, the order of $\alpha_{-k_1}[\uuu]...\alpha_{-k_n}[\uuu] \alpha_{-\ell_1} [\ppp] ... \alpha_{-\ell_m} [\ppp]$ does not matter.

$\H [\proj^1]$ acts on $\cF[\proj^1]$ in a way analogous to the action of $\H$ on $\cF$.  The creation operators
$$\alpha_{-k} [\uuu] \ \mathrm{and} \ \alpha_{-k} [\ppp], \ k \in \Z_{>0},$$
act via
$$ \alpha_{-k} [\uuu] (\prod_{i,j} \alpha_{-k_i} [\uuu] \alpha_{-\ell_j} [\ppp] \vac) = \alpha_{-k} [\uuu] \prod_{i,j} \alpha_{-k_i} [\uuu] \alpha_{-\ell_j} [\ppp] \vac,$$
and similarly for $\alpha_{-\ell} [\ppp].$
The annihilation operators
$$\alpha_{k} [\uuu] \ \mathrm{and } \ \alpha_{k} [\ppp],\  k\in\Z_{>0},$$ kill the vacuum
$$
\alpha_k [\uuu] ( \vac )=0 \ \mathrm{and} \ \alpha_k [\ppp] ( \vac )=0, \quad k>0 \,,
$$
and their action on any element $\prod_{i,j} \alpha_{-k_i} [\uuu] \alpha_{-\ell_j} [\ppp] \vac$ is determined by the commutation relations (\ref{Hp1commutation}).

%Let $\mm(\mu)$ denote the combinatorial factor
%$$\mm(\mu)= \frac{\prod_{i=1}^{\ell(\mu)} \mu_i \ }{|\Aut(\mu)|} .$$
A natural basis of $\cF[\proj^1]$ is given by the vectors
\begin{equation*}
  \label{basis}
  \lv \mu,\nu \rang = \frac{1}{\zz(\mu)\zz(\nu)}
 \, \prod_{i=1}^{\ell (\mu )} \alpha_{-\mu_i}[\uuu]
\prod_{j=1}^{\ell (\nu )} \alpha_{-\nu_j}[\ppp]
 \, \vac \,
\end{equation*}
indexed by all pairs of partitions $\mu$ and $\nu$ (of possibly different sizes).  As before, $\zz(\mu)$ denotes the combinatorial factor
$$\zz(\mu)= |\Aut(\mu)|\cdot \prod_{i=1}^{\ell(\mu)} \mu_i \ \ .$$
An inner product is defined by
\begin{equation*}
  \label{inner_prod}
  \lang \mu,\nu | \mu',\nu' \rang =
\frac{u^{-\ell(\mu)}}{\zz(\mu)}
\frac{u^{-\ell(\nu)}}{\zz(\nu)} \delta_{\mu\nu'}\delta_{\nu\mu'}
\,.
\end{equation*}

We define a new operator $\MM_S$ on the Fock space $\cF[\proj^1]$ by the following formula,
$$\MM_S(u,Q) = \sum_{k>0} \alpha_{-k}[\ppp] \alpha_k[\ppp]
+ Q\sum_{|\mu|=|\nu|>0} u^{\ell(\mu)-1} \alpha_{-\mu}[\uuu]\alpha_\nu[\uuu]\ \ .$$
The second sum is over all pairs of nontrivial partitions $\mu$ and $\nu$ of equal size, and
\begin{eqnarray*}
\alpha_{-\mu}[\uuu]&=& \frac{1}{|\Aut(\mu)|}
\prod_{i=1}^{\ell(\mu)} \alpha_{-\mu_i}[\uuu]\ , \\
\alpha_{\nu}[\uuu]&=& \frac{1}{|\Aut(\nu)|}
\prod_{i=1}^{\ell(\nu)} \alpha_{\nu_i}[\uuu]\ . \\
\end{eqnarray*}
As before, $u$ is the genus variable. The new variable $Q$ will be related to the curve class.

The operator $\MM_S$ is self-adjoint with respect to the inner product \eqref{inner_prod} and hence diagonalizable.
The Hurwitz operator $\MM_H$ occurs as a summand in the second term of $\MM_S$.
Unlike $\MM_H$, the operator $\MM_S$ does not appear to be diagonalizable
over $\Q$.

\section{$\proj^1 \times \proj^1$}
Let the variables $Q_1$ and $Q_2$ correspond to the curve classes of the fibers of the first and second projections to $\proj^1$ respectively.  The partition function for Severi degrees of $\proj^1 \times \proj^1$ is
$$\mathsf{Z}^{\proj^1\times \proj^1}=1+\sum_{g\in \mathbb{Z}} u^{g-1}
\sum_{(d_1,d_2)} \  N_{g,(d_1,d_2)}^{\bullet} \frac{t^{2d_1+2d_2+g-1}}{(2d_1+2d_2+g-1)!} \ Q_1^{d_1} Q_2^{d_2}
$$
where the second sum is over all non-negative $d_i$ satisfying $(d_1,d_2)\neq (0,0)$.
The vector
$$\mathsf{v}= \sum_{d_1\geq 0} |\ (1^{d_1}), \emptyset\ \rangle$$
in $\cF[\proj^1]$ plays an important role.

\begin{Theorem} \label{MainTheorem}
${\mathsf Z}^{\proj^1\times \proj^1} = e^{tQ_2/u} \big\langle\ \mathsf{v} \ | \
Q_1^{|\cdot|}
\exp\big(t\mathsf{M}_S(u,Q_2)\big)
\ | \ \mathsf{v}\ \big\rangle.$
\end{Theorem}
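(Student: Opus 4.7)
The plan is a degeneration argument that parallels the Fock-space derivation of the Hurwitz partition function for $\proj^1$. Degenerate the second $\proj^1$-factor of $\proj^1 \times \proj^1$ into a chain of $\proj^1$'s; the central fiber is then a chain of $(\proj^1 \times \proj^1)$-pieces glued along fibers of $\pi_2$. A curve of class $(d_1, d_2)$ meets each such fiber in $d_1$ points, so the relative condition along an internal fiber is a cohomology-weighted partition of $d_1$---precisely a basis vector $|\mu, \nu\rangle$ of $\cF[\proj^1]$. Distributing the $n = 2d_1 + 2d_2 + g - 1$ point insertions one per piece and applying the degeneration formula of Li, Li-Ruan, Ionel-Parker, the invariant $N^\bullet_{g,(d_1,d_2)}$ decomposes as a sum over gluing data of products of relative invariants of the pieces; the inner product on $\cF[\proj^1]$ (with its $u^{-\ell}/\zz$ weights) is designed exactly to encode the combinatorial gluing factors (multiplicities, automorphisms, and node-genus contributions).

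Two kinds of building blocks must be identified. An end piece, carrying no insertion, admits only configurations of $d_1$ lines in the direction of the $\pi_1$-fibers meeting the boundary fiber in $d_1$ unconstrained points, contributing the boundary vector $\mathsf{v} = \sum_{d_1 \geq 0} |(1^{d_1}), \emptyset\rangle$ (the $\uuu$-decorations record that the positions on the boundary fiber are integrated out). For an interior piece with one point insertion, one must verify that the relative invariant with incoming condition $|\nu\rangle$ and outgoing condition $|\mu\rangle$ equals $\lang \mu | \MM_S(u, Q_2) | \nu \rang$. A bidegree $(d_1, 0)$ interior piece is a union of $d_1$ $\pi_1$-fiber lines (possibly with multiplicity), and the point insertion $\ev^*\ppp$ pins one at a specified transverse position; this converts a $\uuu$-part of size $k$ into a $\ppp$-part of size $k$, reproducing $\sum_k \alpha_{-k}[\ppp]\alpha_k[\ppp]$. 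A bidegree $(d_1, 1)$ interior piece contains a single $\pi_2$-fiber component through the point (contributing the factor $Q_2$) connected by nodes to $\pi_1$-fiber lines on either side; the $u^{\ell(\mu')-1}$ weight captures the Euler characteristic of the resulting reducible configuration, and all decorations are $\uuu$ because the slice position of the $\pi_2$-fiber is integrated out, producing $Q_2 \sum u^{\ell(\mu')-1}\alpha_{-\mu'}[\uuu]\alpha_{\nu'}[\uuu]$.

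Isolated connected components of class $(0, 1)$ with one point insertion are single $\pi_2$-fibers through a point. Each such component sits inside a single piece, meets the relative divisors trivially, and contributes $tQ_2/u$ to the partition function. Summing over any number of such disconnected components produces the overall factor $e^{tQ_2/u}$. Chains of interior pieces summed with the $1/k!$ ordering factor yield $\exp(t\MM_S)$, while the total $d_1$ is tracked by $Q_1^{|\cdot|}$.

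The main technical obstacle is the bookkeeping in the $(d_1, 1)$-building block: matching the $u^{\ell(\mu')-1}$ weight and the normalization $\alpha_{-\mu'}[\uuu] = \frac{1}{|\Aut(\mu')|}\prod \alpha_{-\mu'_i}[\uuu]$ against the automorphism, multiplicity, and node-genus factors coming from the degeneration formula on reducible configurations, as well as against the $\zz$-factors in the Fock-space inner product. A secondary obstacle is controlling direct single-piece contributions with piece-bidegree at least two in the degenerating direction, which should either vanish or be absorbed by a further degeneration ensuring that each piece has vertical bidegree at most one.
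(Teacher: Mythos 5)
Your proposal follows essentially the same route as the paper: the same chain degeneration with one point insertion per interior piece, end caps producing $\mathsf{v}$, and the identification of the three interior configurations (multiple cover of a line in class $(1,0)$ through the point, a bidegree $(\cdot,1)$ component, and an isolated $(0,1)$ fiber) with the three summands $\sum_k \alpha_{-k}[\ppp]\alpha_k[\ppp]$, $Q_2\sum u^{\ell(\mu)-1}\alpha_{-\mu}[\uuu]\alpha_\nu[\uuu]$, and $Q_2/u$ of $\mathsf{N}_S$. Your flagged ``secondary obstacle'' is resolved exactly by the dimension count you already use elsewhere (the equation $2b+h+\ell(\sigma)+\ell(\sigma')\le 2$ on each connected component forces vertical degree at most one per piece, with no further degeneration needed), and the remaining weight-matching is the same bookkeeping the paper also leaves to the reader.
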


Here, $|\cdot|$ is the energy operator on $\cF[\proj^1]$ with eigenvalue $|\mu|+|\nu|$ on the basis vector $|\mu,\nu \rangle$.  The prefactor $e^{tQ_2/u}$ could have been included in the definition of $\mathsf{M}_S$, but we have chosen not to.

\section{Proof of Theorem 1}

\subsection{Overview}
We prove Theorem 1 via the degeneration formula for relative Gromov-Witten invariants \cite{IP,LR,L}.
Consider the Gromov-Witten invariant
 $N_{g,(d_1,d_2)}^{\bullet}$ counting genus $g$ curves through $$n= 2d_1+2d_2+g-1$$ points
on $\proj^1\times \proj^1$.
Let $C$ be a chain of $n+2$ rational curves.  Let $S$ be the surface  $C \times \proj^1$ viewed as
a union of  $n+2$ copies of $\proj^1\times \proj^1$.
We degenerate
\begin{equation}\label{mddx}
\proj^1 \times \proj^1 \leadsto C \times \proj^1
\end{equation}
and distribute the original $n$ point conditions by placing one on each of the middle $n$ components.
We will refer to the $n+2$ components of the degeneration as
$S_{0}, \ldots , S_{n+1}$
and the $n+1$ relative divisors as $D_0,\ldots,D_n$.
The matrix $\mathsf{M_S}$ arises from explicit calculations
on components of the degeneration \eqref{mddx}.

\begin{figure}[h]
\centering
\includegraphics[width=6 in]{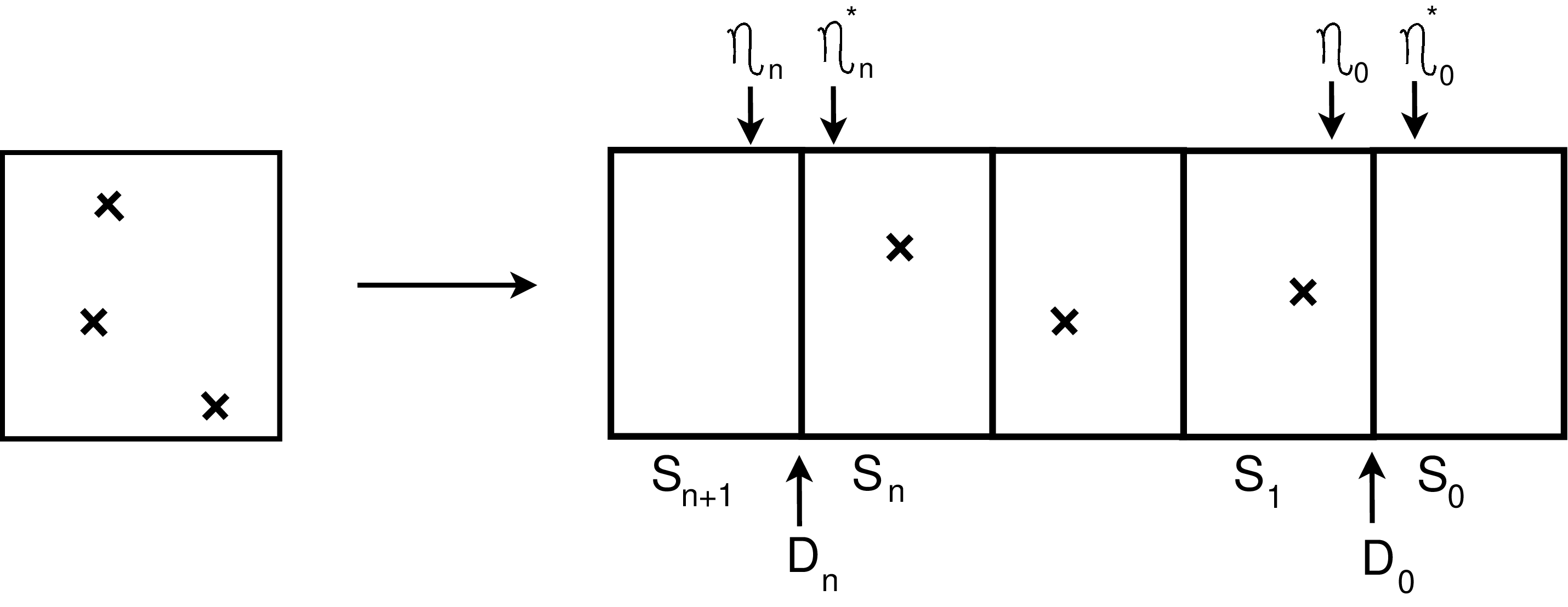}
\caption{The first and last components carry no point conditions, all the rest have one point condition each.  We take $(1,0)$ to be the class of a horizontal line and $(0,1)$ the class of a vertical line.}
\end{figure}

\subsection{Stable relative maps}
A moduli space of stable relative maps is defined for each component $S_i$ of the
above degeneration.
Relative conditions along the divisors $D_i$ are specified
by partitions weighted by
 the cohomology of $\proj^1$.
For $S_i$ where
 $1 \leq i \leq n$, let
$$M^i = \overline{M}^{\bullet}_{g_i,1}(D_i \backslash\, \ponepone / D_{i-1} , (d_1, d_2^i), \eta^*_{i}, \eta_{i-1}, \Gamma^i)$$
denote the moduli space of stable relative degree $(d_1, d_2^i)$ maps of a genus $g_i$ curve to $\ponepone$ of graph type $\Gamma^i$
 satisfying relative
conditions $\eta_{i-1}$ along the divisor $D_{i-1}$ and  $\eta^*_{i}$ along $D_{i}$.  Note the superscript of $\Gamma^i$ and $d_2^i$ is an index, not an exponent.

The graph type $\Gamma^i $ fixes the topology of the map.  Each vertex of $\Gamma^i$
corresponds to a component of the domain curve and is labeled with the genus of that component.
For each relative condition on that domain curve the vertex is given a half-edge labeled
with the corresponding relative condition. The unique marked point is assigned
to a single component of $\Gamma^i$ (which satisfies the incidence condition).

%
% and the interior sum in (\ref{RelativeEqn}) is over all $n+1$-tuples $(\eta_0,...,\eta_n)$ of partitions of $d_1$ weighted in the cohomology of $\proj^1$, all partitions $d_2 = d_2^0 + ... + d_2^{n+1}$, and all decorated graphs $\Gamma$ \cite{L}.

The outside components $S_0$ and $S_{n+1}$ play a special role. Following the above
conventions, let
$$M^0 = \overline{M}^{\bullet}_{g_0,0}(D_0 \backslash \ponepone, (d_1, d_2^0), \eta^*_0, \Gamma^0)$$
and let
$$M^{n+1} = \overline{M}^{\bullet}_{g_{n+1},0}( \ponepone / D_{n}, (d_1, d_2^{n+1}), \eta_n, \Gamma^{n+1})\ .$$
For all the above moduli spaces $M^i$, we will view the relative markings  on the domain of the
map
as {\em ordered}.

\subsection{Partition notation}
We take all our partitions to be ordered partitions.
\begin{defn}
Let $\rho$ be a partition of $d$ and let $\rho(k)$ be the number of parts of size $k$ in $\rho$, so $d = \sum_{k = 1}^{\infty} \rho(k) k$.
\end{defn}

Let $\rho = \rho_1 + \cdots + \rho_m$ and $\lambda = \lambda_1 + \cdots + \lambda_n$ be two partitions, and $d = |\rho| + |\lambda|$.  We say
$$\rho [\uuu] + \lambda [\ppp] = \rho_1 [\uuu] + \cdots + \rho_m[\uuu] + \lambda_1 [\ppp]
+ \cdots + \lambda_n [\ppp]$$
is a cohomology weighted partition of $d$, weighted by the cohomology of $\proj^1$.

Let $\cup$ denote concatenation of partitions,
$$\rho \cup \lambda = \rho_1 + \cdots + \rho_m + \lambda_1 + \cdots + \lambda_n.$$

\begin{defn}
Let $\eta = \rho [\uuu] + \lambda [\ppp]$ be a partition weighted by the cohomology of $\proj^1$.  Let
$$m(\eta)= \prod_i \rho_i \prod_j \lambda_j,\ \ \
\text{\em Aut}(\eta) = \text{\em Aut}(\rho) \times \text{\em Aut}(\lambda),\ \ \
\eta^* = \lambda [\uuu] + \rho [\ppp].$$
\end{defn}
%For example, if $\eta = 2 [\uuu] + [\uuu] + [\uuu] + 3 [\ppp]$, $m(\eta) = 6$ and $|Aut(\eta)| = 2$.

\subsection{Degeneration} \label{poneponeDegen}

By the degeneration formula of \cite{IP,LR,L},
%{\tiny
%$$\int_{[\overline{M}^\bullet_{g,n}(\ponepone,(d_1,d_2))]^{vir}} \prod_{i = i}^{n} \text{ev}_i^*(\ppp)
%= \sum_{\substack{d^1_2 + ... + d^n_2 = d_2 \\ \eta_0,...,\eta_{n} \\ \Gamma}}
%\left(\int_{[M^{n+1}]} 1 \right) \frac{m(\eta_{n})}{ |Aut(\eta_n)|} \prod_{i = 1}^{n} \left[ \left(\int_{[M^i]} \text{ev}_1^*(\ppp) \right) \frac{m(\eta_{i-1})}{ |Aut(\eta_{i-1})|}  \right]
%\left(\int_{[M^{0}]} 1 \right)$$
%}

\begin{align*}
%\int_{[\overline{M}^\bullet_{g,n}(\ponepone, (d_1,d_2))]^{vir}} & \prod_{i = i}^{n} \text{ev}_i^*(\ppp)
%
N^\bullet_{g,(d_1,d_2)}
& = \sum_{d^i_2,\eta_i,\Gamma^i}
\left(\int_{[M^{n+1}]} 1 \right) \frac{m(\eta_{n})}{ |\text{Aut}(\eta_n)|} \prod_{i = 1}^{n} \left[ \left(\int_{[M^i]} \text{ev}_1^*(\ppp) \right) \frac{m(\eta_{i-1})}{ |\text{Aut}(\eta_{i-1})|}  \right]
\left(\int_{[M^{0}]} 1 \right)\ .
\end{align*}
The sum is over all degree splittings $$d_2^0 + ... + d_2^{n+1}=d_2,$$
relative conditions $\eta_0, \ldots, \eta_n$, and compatible
graph types $\Gamma^0,\ldots, \Gamma^{n+1}$ which connect to form a genus $g$ curve.
The relative conditions $\eta_i^*$ are set by Definition 2.
On the right side, $[M^i]$ denotes the virtual fundamental class of the moduli
space $M^i$.

Equivalently, we can write the partition function of the Severi degrees of the surface
$\proj^1 \times \proj^1$ as
\begin{multline} \label{RelativeEqn}
 \mathsf {Z}^{\proj^1\times \proj^1} = 1 + \sum_{g,d_1,d_2} Q_1^{d_1} Q_2^{d_2} u^{g-1}\frac{t^n}{n!} \sum_{d_2^i,\eta_i,\Gamma^i}
\left(\int_{[M^{n+1}]} 1 \right) \frac{m(\eta_{n})}{ |\text{Aut}(\eta_n)|} \\
 \times \prod_{i = 1}^{n} \left[ \left(\int_{[M^i]} \text{ev}_1^*(\ppp) \right)
\frac{m(\eta_{i-1})}{ |\text{Aut}(\eta_{i-1})|}  \right]
\left(\int_{[M^{0}]} 1 \right).
\end{multline}
In the above formula, $n=2d_1+2d_2+g-1$ as usual.
%Hence
%{\tiny \label{RelativeEqn}
%\begin{equation}
%\mathsf {Z}^{\proj^1\times \proj^1} = 1 + \sum_{n,d_1,d_2} \frac{t^n}{n!} Q_1^{d_1} Q_2^{d_2} u^{g-1} \sum_{\substack{d^1_2 + ... + d^n_2 = d_2 \\ \eta_0,...,\eta_{n} \\ \Gamma}}
%\left(\int_{[M^{n+1}]} 1 \right) \frac{m(\eta_{n})}{ |Aut(\eta_n)|} \prod_{i = 1}^{n} \left[ \left(\int_{[M^i]} \text{ev}_1^*(\ppp) \right) \frac{m(\eta_{i-1})}{ |Aut(\eta_{i-1})|}  \right]
%\left(\int_{[M^{0}]} 1 \right).
%\end{equation}}

\subsection{Geometry of the components}

\subsubsection{The caps}
We  will now analyze the integrals appearing in the degeneration formula \eqref{RelativeEqn}.
Consider first the $i=0$ term corresponding to the component $S_0$.
Let the relative condition be
$$\eta^*_0 = \rho[\uuu] + \lambda [\ppp]$$
where
$\rho$ and $\lambda$ are partitions satisfying $|\rho|+|\lambda| = d_1$.

Let $R$ be a component of the domain curve of a map to $S_0$ parameterized
by $M^0$, and let
$$\sigma[\uuu] + \tau[\ppp]$$
be the relative condition imposed on $R$.  Suppose the genus of $R$ is $h$ and $f_*[R] = \beta = (a,b)$.  The dimension of the space of such maps is
$$\mathrm{dim}_\com \ \overline{M}_{h,0}(\ponepone, \beta)  = \int_{(a,b)} c_1(T_{\ponepone}) + h-1 = 2a + 2b + h - 1.$$
Meanwhile the number of conditions imposed on the map by the relative conditions is
$$\sum (\sigma_i - 1) + \sum \tau_j = a - \ell(\sigma).$$
After setting the dimension to equal the number of conditions,  we obtain
$$a + 2b + h + \ell(\sigma) = 1.$$

Each term on the left hand side is nonnegative, so $b$ must be zero. Certainly $h\leq 1$.
Since $R$ has no marked points to stabilize, $a$ cannot also vanish.  Hence,
the only possible solution is
$$a=1 \ \mathrm{and} \ b=h=\ell(\sigma)=0.$$
We see
 $R$ must be a genus 0 curve mapping with degree 1 onto a line in the class $(1,0)$
 with the relative condition $[\ppp]$.

Therefore the integral over $M^0$ vanishes unless
$$d_2^0 = 0 \ \mathrm{and} \ \eta_0^* = [\ppp] + \cdots + [\ppp]$$
and $\Gamma^0$ is a graph on $d_1$ vertices with a half-edge at each vertex.
If the above conditions are satisfied, the moduli space $M^0$ consists of
a single point which parameterizes a map of $d_1$ disconnected rational curves to $\ponepone$, each with a fixed condition of multiplicity 1, mapping with degree 1 to the unique curve in the class $(1,0)$ passing through that fixed relative condition.
As such a map has no automorphisms, we find
$$\int_{[M^0]} 1 =1 . $$
The analysis for $S_{n+1}$ is identical.
%% = \int_{[M^{n+1}]} 1 = 1.$$

\begin{figure}[h]
  \centering
  \includegraphics[width=3 in]{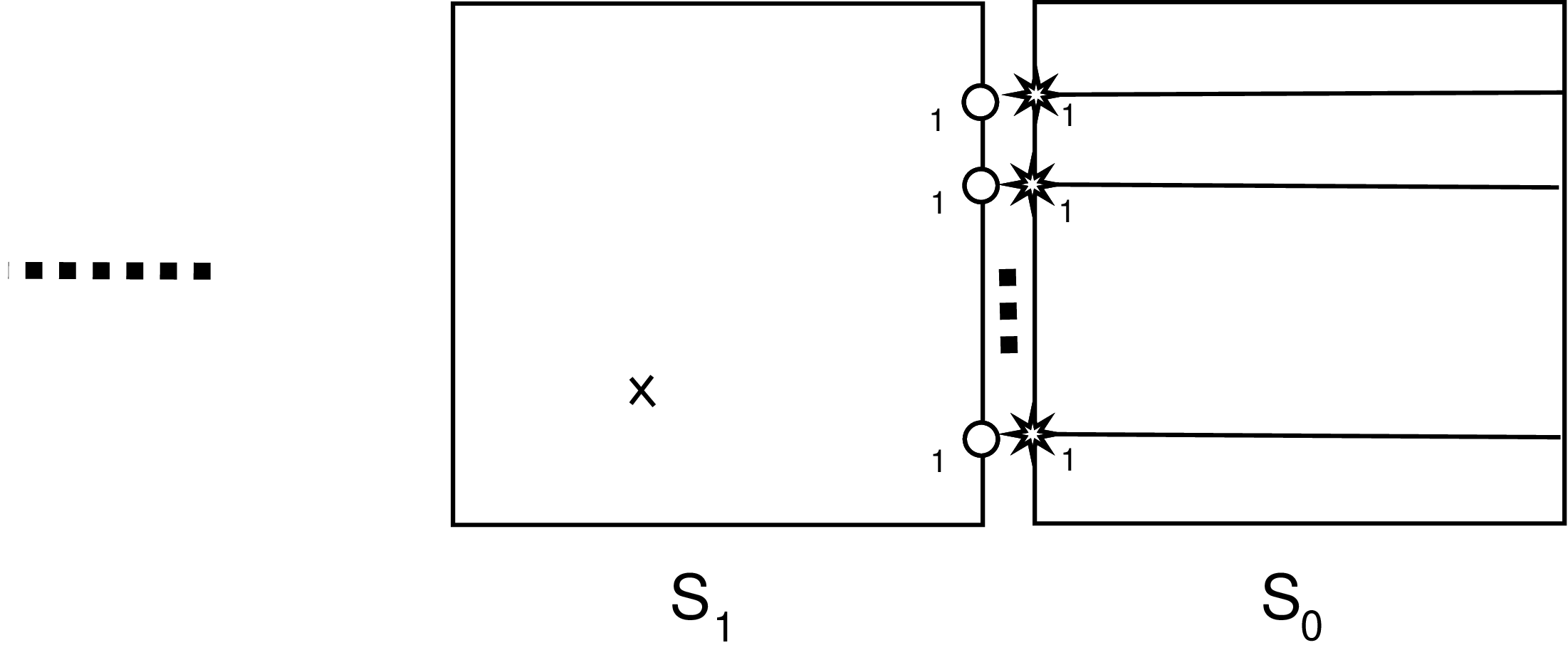}
  \caption{The image of the only map to the cap $S_0$.}
\end{figure}

The evaluations of the integrals in \eqref{RelativeEqn} corresponding
to the end components are

\begin{equation} \label{Capboth}
 \left(\int_{[M^0]} 1 \right) = 1, \ \ \ \ \ \ \ \  \
\
\left(\int_{[M^{n+1}]} 1 \right) \frac{m(\eta_{n})}{|\text{Aut}(\eta_{n})|} = \frac{1}{d_1!}.
\end{equation}

\subsubsection{Middle components} \label{middle11}
Next, we analyze the components  $S_i$ in  case $1 \leq i \leq n$.  Let
$$\eta_{i-1} = \rho[\uuu] + \lambda[\ppp], \ \ \  \eta_i^* = \rho'[\uuu] + \lambda' [\ppp].$$
As before, consider a single genus $h$ connected component $R$ of the domain curve
of  a map to $S_i$ parameterized
by $M^i$.
Let
$$\sigma[\uuu] + \tau[\ppp] \ \ \text{and} \ \ \sigma'[\uuu] + \tau'[\ppp]$$
be the relative conditions imposed on $R$ along $D_{i-1}$ and
$D_{i}$ respectively.
Let $f_*[R] = \beta = (a,b)$.
As $M^i$ is a moduli space of 1-pointed curves, there are two cases:
 either the marked point of lies on $R$ or $R$ is unpointed.

Consider first the case where $R$ does not carry a marked point. Then,
$$\mathrm{dim}_\com \ \overline{M}_{h,0}(\ponepone, \beta)  =  2a + 2b + h - 1\ .$$
The relative conditions impose
$$\sum (\sigma_i-1) + \sum (\sigma'_{i'}-1) + \sum \tau_j  + \sum \tau_{j'} $$
conditions.
After equating the two, we obtain
$$2b + h + \ell(\sigma) + \ell(\sigma') = 1. $$
The unique solution (up to exchanging $\sigma$ and $\sigma'$) is easily determined
to be
$$h = 0,\  b = 0,\ \ell(\sigma) = 1, \ \ell(\sigma') = 0.$$
We see that $\sigma' = \tau = \emptyset$ and also that $\sigma$ and $\tau'$ must each be single part partitions.  We then deduce that $\sigma = \tau' = a$.  
Then  $R$ is rational and maps with degree $|\sigma| = a$ to the line of class $(1,0)$ through the point fixed by $\tau_1' [\ppp]$ ramified totally over $D_{i-1}$ and $D_i$.  The moduli space of such maps is isomorphic as a stack to 
$B \Z/a\Z$.

We consider next the case where $R$ carries the marked point,
$$\mathrm{dim}_\com \ \overline{M}_{h,1}(\ponepone, \beta) = 2a + 2b + h.$$
The relative conditions impose
$$\sum (\sigma_i-1) + \sum (\sigma'_{i'}-1) + \sum \tau_j + \sum \tau_{j'}$$
conditions on such a map.  Setting their difference equal to 2, the degree of $\text{ev}_1^*(\ppp)$, we obtain
$$2b + h + \ell(\sigma) + \ell(\sigma') = 2.$$
Again, the terms on the left hand side are all nonnegative.  The possible solutions are
\begin{eqnarray*} \label{TypeA}
\mathrm{Type \ A:} & & \ \ h= 0,\ b=0,\ \ell(\sigma) = \ell(\sigma') = 1, \\
 \label{TypeBC}
\mathrm{Type \ B:} & &\ \ h = 0,\ b = 1, \ \ell(\sigma) = \ell(\sigma') = 0,\
 \ \ell(\tau), \ell(\tau') \neq 0, \\
 \label{TypeBCd}
\mathrm{Type \ C:} & & \ \ h = 0,\ b = 1, \  \ell(\sigma) = \ell(\sigma') = 0, \ \
\ell(\tau) = \ell(\tau') = 0,
\end{eqnarray*}
The arithmetically allowed solution $h=0$, $b = 0$, $\ell(\sigma) = 2$,
$\ell(\sigma') = 0$ and $h=1$, $b=0$, $\ell(\sigma)=1$, $\ell(\sigma')=0$
do not contribute by simple geometric considerations.

%put figure here of all three types, side by side.
\begin{figure}[h]
  \centering
  \includegraphics[width=6 in]{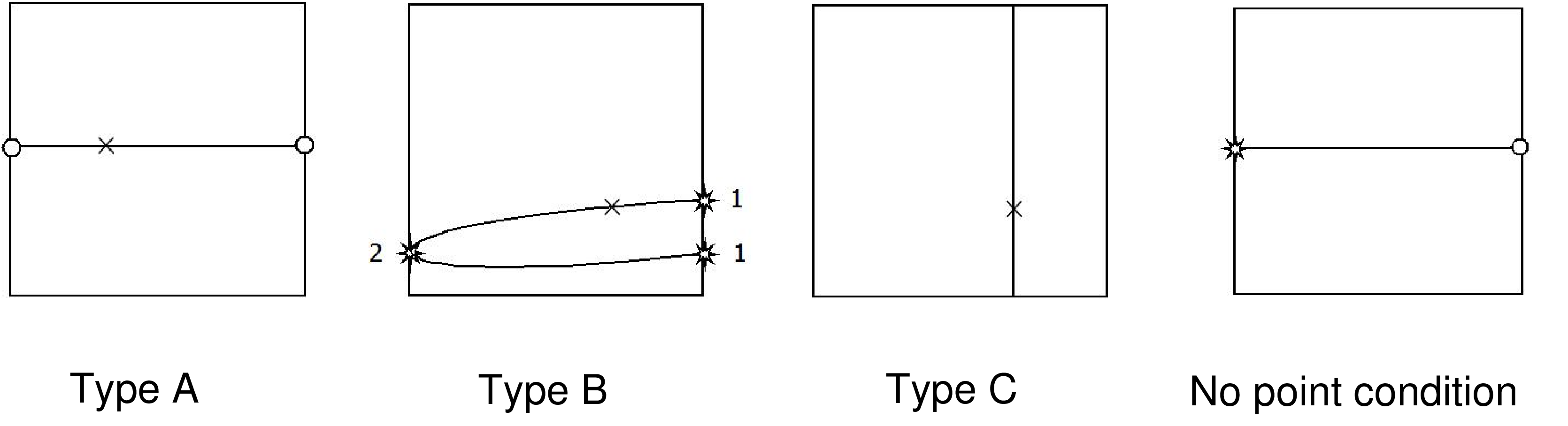}
  \caption{Examples of the images of maps of each type.}
\end{figure}

If $R$ is a component of Type A, then $\sigma = \sigma' = |\sigma|$ is a partition with only one part and $R$ is a 1-pointed rational curve mapping with degree $|\sigma|$ to a line in the class $(1,0)$ totally ramified over the two relative divisors $D_{i-1}$ and $D_i$. The moduli space $M$ of such maps is isomorphic to $\proj^1$.  Since $R$ has a marked point and the map has two ramification points, there are no automorphisms of this map and $\int_{[M]} ev_1^*(\ppp) = 1$.

If $R$ is a component of Type B, then $R$ is a rational curve mapping to $\ponepone$ with image the zero set of
$$x_0 p(y_0, y_1) +  x_1 q(y_0, y_1) $$
where $p$ and $q$ are homogeneous polynomials of degree $|\tau|$.  The map is degree 1 onto its image.  Up to scaling, $q(y_0,y_1)$ is determined by the relative condition $\tau$, and $p(y_0,y_1)$ is determined up to scaling by the relative condition $\tau'$. Again the moduli space $M$ of such maps is isomorphic to $\proj^1$. A general such map has no automorphisms, so $\int_{[M]} ev_1^*(\ppp) = 1$.

If $R$ is a component of Type C, $R$ is a rational curve mapping with degree 1 onto a line in the class $(0,1)$. The moduli space $M$
of such maps is isomorphic to $\proj^1$, and $\int_{[M]} ev_1^*(\ppp) = 1$.

In conclusion, if the partitions $\eta_{i-1}$ and $\eta_i$ are such that the domain curve has $k$ components, then
$$\int_{[M^i]} ev_1^*(\ppp) = \frac{1}{m_1 \cdot ... \cdot m_{k-1}},$$
where $m_1,\ldots,m_{k-1}$ are the degrees with which the $k-1$ unmarked components map to $\ponepone$.

\subsection{Proof of the Main Theorem}

We must show the matrix product
\begin{equation}\label{gfft}
e^{tQ_2/u} \big\langle\ \mathsf{v} \ | \ Q_1^{|\cdot|} \exp\big(t\mathsf{M}_S(u,Q_2)\big) \ | \ \mathsf{v}\ \big\rangle
\end{equation}
is the partition function for Severi degrees of $\ponepone$.
% by matching the terms of this expression with those of Equation (\ref{FinalRelativeEqn}), which expresses the partition function $\mathsf {Z}^{\proj^1\times \proj^1}$ in terms of the relative Gromov-Witten invariants of a specific degeneration of $\ponepone$.
We begin by bringing the exponential inside and rewriting:
\begin{multline*}
e^{tQ_2/u} \big\langle\ \mathsf{v} \ | \ Q_1^{|\cdot|} \exp\big(t\mathsf{M}_S(u,Q_2)\big) \ | \ \mathsf{v}\ \big\rangle\\ =
\big\langle\ \mathsf{v} \ | \ Q_1^{|\cdot|} \exp\left(t \left(\sum_{k>0} \alpha_{-k}[\ppp] \alpha_k[\ppp]
+ Q_2 \sum_{|\mu|=|\nu|>0} u^{\ell(\mu)-1} \alpha_{-\mu}[\uuu] \alpha_\nu[\uuu] + \frac{Q_2}{u}\right) \right) \ | \ \mathsf{v}\ \big\rangle\ .
\end{multline*}
Via the operator
\begin{equation}
\label{jjjj}
 \mathsf{N}_S = \sum_{k>0}  \alpha_{-k}[\ppp] \alpha_k[\ppp] \ + \sum_{|\mu|=|\nu|>0} Q_2 u^{\ell(\mu)-1}  \alpha_{-\mu}[\uuu] \alpha_\nu[\uuu] \ + \frac{ Q_2}{u}\ , \end{equation}
we can rewrite \eqref{gfft} as
\begin{equation}\label{nfft}
\sum_{d_1} Q_1^{d_1} \sum_n \frac{t^n}{n!} \big\langle\ (1^{d_1}), \emptyset \ | \ {\mathsf{N}}_S^n \ |\ (1^{d_1}), \emptyset\ \big\rangle\ .
\end{equation}
It is a simple matter now to match equation \eqref{RelativeEqn} for the partition function
with \eqref{nfft}.
The main point is to match the three summands of $\mathsf{N}$ in \eqref{jjjj} with the three configuration types $A$, $B$, and $C$
of Section \ref{middle11} respectively. We leave the routine bookkeeping to the reader. \qed

\section{Other Surfaces}
\subsection{Overview}
Let $B=Bl_2(\ponepone)$ be the blow-up of $\proj^1\times \proj^1$ at the (distinct) points $p$ and $\widehat{p}$.  Let the variables $E$ and $\widehat{E}$ correspond to the curve classes of the two exceptional divisors.  As before, we define the partition function for Severi degrees as
$$\mathsf{Z}^{B}=1+\sum_{g\in \mathbb{Z}} u^{g-1}
\sum
%{(d_1,d_2,e,\widehat{e})} \
 N_{g,(d_1,d_2,e,\widehat{e})}^{\bullet}
\frac{t^{2d_1+2d_2 +e+\widehat{e}+g-1}}{(2d_1+2d_2+e+\widehat{e}+
g-1)!} \ Q_1^{d_1} Q_2^{d_2} E^{e} \widehat{E}^{\widehat{e}}
$$
where the second sum is over all tuples $(d_1,d_2,e,\widehat{e})$ corresponding to non-zero curve classes.

We define two vectors $\mathsf{w}, \widehat{\mathsf{w}}$ in $\cF[\proj^1]$. First,
$$\mathsf{w}= e^{\frac{Q_2}{uE}+\frac{E}{u}}\sum_{m\geq 0}\sum_\nu E^{-|\nu|}\prod_{j=1}^{\ell(\nu)}
\frac{(-1)^{\nu_j-1}}{\nu_j}\  |\ (1^{m}), \nu \ \rangle$$
where the second sum is over all partitions $\nu$.
Second,
$$\widehat{\mathsf{w}}= e^{\frac{Q_2}{u\widehat{E}}+ \frac{\widehat{E}}{u}}
\sum_{m\geq 0}\sum_\nu \widehat{E}^{-|\nu|}\prod_{j=1}^{\ell(\nu)}
\frac{(-1)^{\nu_j-1}}{\nu_j}\  |\ (1^{m}), \nu \ \rangle\ .$$

\begin{Theorem} \label{BlowupTheorem}
$
{\mathsf Z}^{B} = e^{tQ_2/u} \big\langle\ \mathsf{w} \ | \
Q_1^{|\cdot|}
\exp\big(t\mathsf{M}_S(u,Q_2)\big)
\ | \ \widehat{\mathsf{w}}\ \big\rangle$.
\end{Theorem}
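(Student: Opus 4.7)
The overall plan mirrors the proof of Theorem 1, replacing the two endpoint surfaces by one-point blow-ups while leaving the middle of the degeneration untouched. Concretely, I would degenerate $B \leadsto S_0 \cup S_1 \cup \cdots \cup S_n \cup S_{n+1}$ along a chain so that $p$ sits on the leftmost component and $\widehat{p}$ on the rightmost, giving $S_0 = Bl_p(\ponepone)$ and $S_{n+1} = Bl_{\widehat{p}}(\ponepone)$ as the caps, and $S_1,\dots,S_n$ as unchanged copies of $\ponepone$ each carrying one point condition. Applying the degeneration formula of \cite{IP,LR,L} gives an expression structurally identical to \eqref{RelativeEqn}. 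The middle factors and the combinatorial bookkeeping are literally the same as in Section 6.5, so they reassemble into the $e^{tQ_2/u}$ prefactor together with matrix elements of $Q_1^{|\cdot|}\exp(t\mathsf{M}_S(u,Q_2))$. All of the new content must therefore reside in the two cap vectors.

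To identify the left cap vector, I would classify the possible connected components of a relative stable map to $S_0$ with relative divisor $D_0\in|(0,1)|$ by a dimension count in direct parallel with Section 6.5.1. Writing $\beta = aH_1 + bH_2 + \epsilon E$, genus $h$, and relative condition $\sigma[\uuu] + \tau[\ppp]$ on the component $R$, the virtual dimension equals the number of relative conditions precisely when
\begin{equation*}
a + 2b + \epsilon + h + \ell(\sigma) \;=\; 1.
\end{equation*}
Combined with effectiveness of $\beta$ in $Bl_p(\ponepone)$, I expect exactly four surviving geometric types: (i) degree-one lines in class $H_1$ through fixed points of $D_0$, producing the $(1^m)$ slot of $|(1^m),\nu\rang$; (ii) the rigid exceptional curve $E$ itself, whose disjoint unions sum via $\frac{1}{k!}(E/u)^k$ to $e^{E/u}$; (iii) the rigid $(-1)$-curve $H_2 - E$, whose disjoint unions sum to $e^{Q_2/(uE)}$; and (iv) totally ramified $\nu_j$-fold covers of the $(-1)$-curve $H_1 - E$, giving the $\nu$ slot of $|(1^m),\nu\rang$ with free insertion $\nu_j[\uuu]$ on $D_0$. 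A short check rules out the remaining arithmetic solutions (higher genus covers, two-part $\sigma$, etc.) on geometric grounds, exactly as the two arithmetically allowed types were excluded at the end of Section 6.5.2.

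The heart of the proof is the evaluation of the contribution from type (iv). A $\nu_j$-fold cover of the rigid $(-1)$-curve $H_1 - E$, totally ramified at its unique intersection with $D_0$, gives a $0$-dimensional relative moduli problem; I expect its contribution, after combining the $m(\eta)/|\Aut(\eta)|$ weighting of the degeneration formula with the $\Z/\nu_j$ automorphism of the cover and with the basis normalization $\zz(\nu)$ of $\cF[\proj^1]$, to evaluate to $\frac{(-1)^{\nu_j-1}}{\nu_j}$. This is the main obstacle: it amounts to a local relative calculation on the normal geometry $\oh_{\proj^1}(-1)$ of the $(-1)$-curve, most cleanly handled by torus localization on $S_0$ (or equivalently by the standard local multiple cover formula for a rigid rational curve in a surface). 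The class tracking then supplies the $E^{-\nu_j}$ factor. Multiplying the four types and summing over the number $m$ of type (i) components and over all partitions $\nu$ indexing type (iv) components reproduces exactly the formula for $\mathsf{w}$; by symmetry (and the swap $\delta_{\mu\nu'}\delta_{\nu\mu'}$ built into the inner product on $\cF[\proj^1]$), the right cap $S_{n+1}$ produces $\widehat{\mathsf{w}}$ with $\widehat{E}$ replacing $E$. Substituting these cap vectors back into the degeneration formula completes the proof.
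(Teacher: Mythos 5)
Your proposal is correct and follows the paper's proof essentially verbatim: the same chain degeneration with the two caps replaced by one-point blow-ups, the same dimension count yielding four cap-component types (your (i)--(iv) are the paper's Types II, III, IV, I), and the same reassembly of the middle components into $e^{tQ_2/u}\exp(t\mathsf{M}_S)$. The one nontrivial evaluation you flag --- the totally ramified $a$-fold cover of the rigid $(-1)$-curve --- is handled in the paper exactly as you suggest, by identifying the contribution with $\int_{\overline{M}_{0,0}(\proj^1/\infty,a)_{(a)}} c_{\mathrm{top}}\big(R\pi_* f^*\oh_{\proj^1}(-1)\big) = \frac{(-1)^{a-1}}{a^2}$ from Bryan--Pandharipande, which is the local multiple cover formula you invoke.
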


The proof of Theorem \ref{BlowupTheorem} is given in Section \ref{prf2}. Multiple cover formulas from \cite{Brypan} play a crucial role. The surface $Bl_2(\ponepone)$ is isomorphic to $Bl_3(\proj^2)$. The Severi degrees of the various blow-downs of $Bl_2(\ponepone)$ can be recovered from $\mathsf {Z}^{B}$.  Hence, Theorem \ref{BlowupTheorem} captures the Severi degrees of $\proj^2$ ---  precise formulas are discussed in Section \ref{sec:P2}.

Similarly, for $Bl_1(\ponepone)$, the blow-up of $\proj^1\times \proj^1$ at $p$, we define the partition function for Severi degrees as
$$\mathsf{Z}^{Bl_1(\ponepone)}=1+\sum_{g\in \mathbb{Z}} u^{g-1}
\sum
%{(d_1,d_2,e,\widehat{e})} \
 N_{g,(d_1,d_2,e,)}^{\bullet}
\frac{t^{2d_1+2d_2 +e+g-1}}{(2d_1+2d_2+e+
g-1)!} \ Q_1^{d_1} Q_2^{d_2} E^{e}
$$
where the second sum is over all tuples $(d_1,d_2,e)$ corresponding to non-zero curve classes.
The proof of Theorem \ref{BlowupTheorem} also yields: 
$$\hspace{.66in} {\mathsf Z}^{Bl_1(\ponepone)} = e^{tQ_2/u} \big\langle\ \mathsf{w} \ | \
Q_1^{|\cdot|}
\exp\big(t\mathsf{M}_S(u,Q_2)\big)
\ | \ \mathsf{v}\ \big\rangle. $$

\subsection{Degeneration}

Consider the degeneration of $\ponepone$ described in Section \ref{poneponeDegen}, with the point $p$ in the rightmost component $S_0$ and the point $\hat{p}$ in the component $S_{n+1}$.  To obtain a degeneration of $B$, we
blow up the points $p$ and $\hat{p}$.

\begin{figure}[h]
\centering
\includegraphics[width=5.5 in]{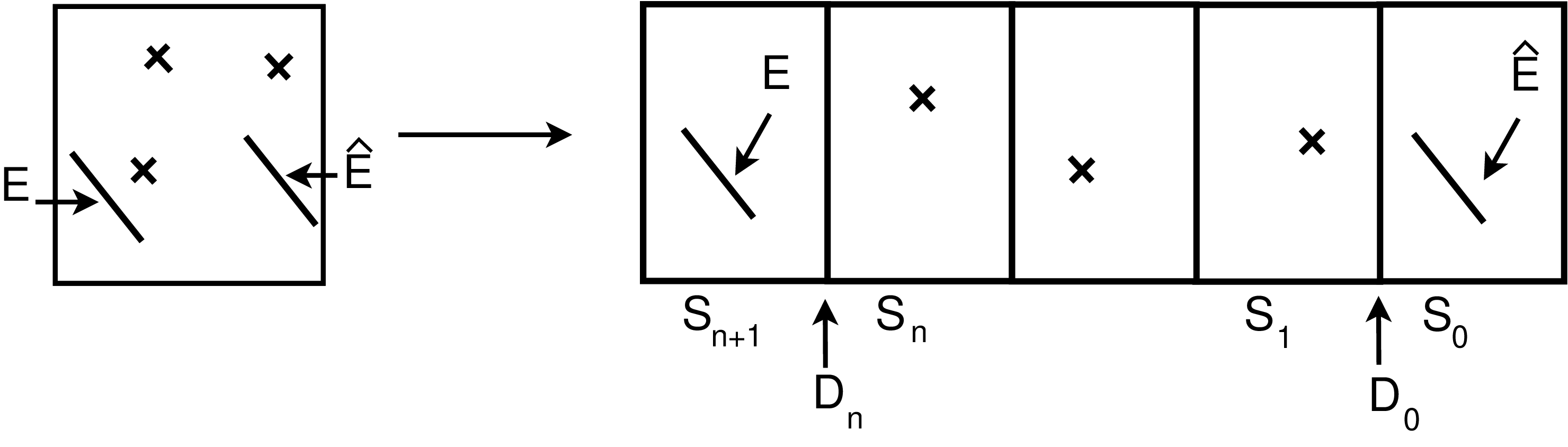}
\caption{Degeneration of $Bl_2(\ponepone)$.  As before, the first and last component carry no point conditions, and all the rest have one point condition each.}
\end{figure}

Again, by the degeneration formula of \cite{IP,LR,L}, we can write the partition function $\mathsf{Z}^B$
for Severi degrees in terms of integrals over moduli spaces of relative maps to the components of the degeneration.
Only the moduli spaces
spaces $M^0$ and $M^{n+1}$ differ from the geometry already considered in the proof Theorem 1.

\subsection{The caps}
 \label{sec:BlowupCaps}
We begin by analyzing the case $i=0$ or $n+1$.  Without loss of generality take $i=0$ and let the relative condition be
$$\eta^*_0 = \rho [\uuu] +\lambda [\ppp].$$
Let $R$ be a component of the domain curve  of a maps to $S_0$ parameterized by $M^0$, and let
$$\sigma[\uuu] + \tau[\ppp]$$
be the relative condition imposed on $R$.  Suppose the genus of $R$ is $h$ and
$$f_*[R] = \beta = a L_1 + b L_2 - e E, $$
where $L_1$ and $L_2$ and the horizontal and vertical line classes.
Since $\beta$ is effective, we must have
$$a+b \geq e, \ \ a \geq 0, \ \mathrm{and} \ b \geq 0.$$

We compute the dimension of the space of maps as
$$\mathrm{dim}_\com \ \overline{M}_{h,0}(Bl_1(\ponepone), \beta)  = \int_{\beta} c_1(T_{Bl_1(\ponepone)}) + h-1 = 2a + 2b - e + h - 1.$$
Meanwhile, the number of conditions imposed on the map by the relative conditions is
$$\sum (\sigma_i - 1) + \sum \tau_j = a - \ell(\sigma).$$
After equating the dimension counts, we obtain
$$\ell(\sigma) + (a+b-e) + b + h = 1.$$

Each term on the left hand side is nonnegative so exactly one term must equal 1, and all others must vanish.  We find there are four types of maps possible.

\vspace{8pt}
\noindent $\bullet \ \ \mathrm{Type \ I:} \ \ell(\sigma) = 1,\ a = e,\ b=0, \ \mathrm{and} \ h=0.$
\vspace{8pt}

Here $R$ is a rational curve mapping as a  degree $a$ cover of the unique curve in the class $L_1 - E$.

\vspace{8pt}
\noindent $\bullet \ \ \mathrm{Type \ II:} \ \ell(\sigma) = 0, \ a = 1,\ e=0,\ b=0, \ \mathrm{and} \ h=0.$
\vspace{8pt}

Here $R$ is a rational curve mapping with degree 1 to a curve in the class $L_1$.

\vspace{8pt}
\noindent $\bullet \ \ \mathrm{Type \ III:} \ \ell(\sigma) = 0,\ a = 0,\ e=-1,\ b=0, \ \mathrm{and} \ h=0.$
\vspace{8pt}

Here $R$ is a rational curve mapping with degree 1 to the exceptional curve $E$.

\vspace{8pt}
\noindent
$\bullet \ \ \mathrm{Type \ IV:} \ \ell(\sigma) = 0,\ a = 0,\ e=1,\ b=1, \ \mathrm{and} \ h=0.$
\vspace{8pt}

Here $R$ is a rational curve mapping with degree 1 to the unique curve in the class $L_2-E$.

\begin{figure}[h]
  \centering
  \includegraphics[width=5 in]{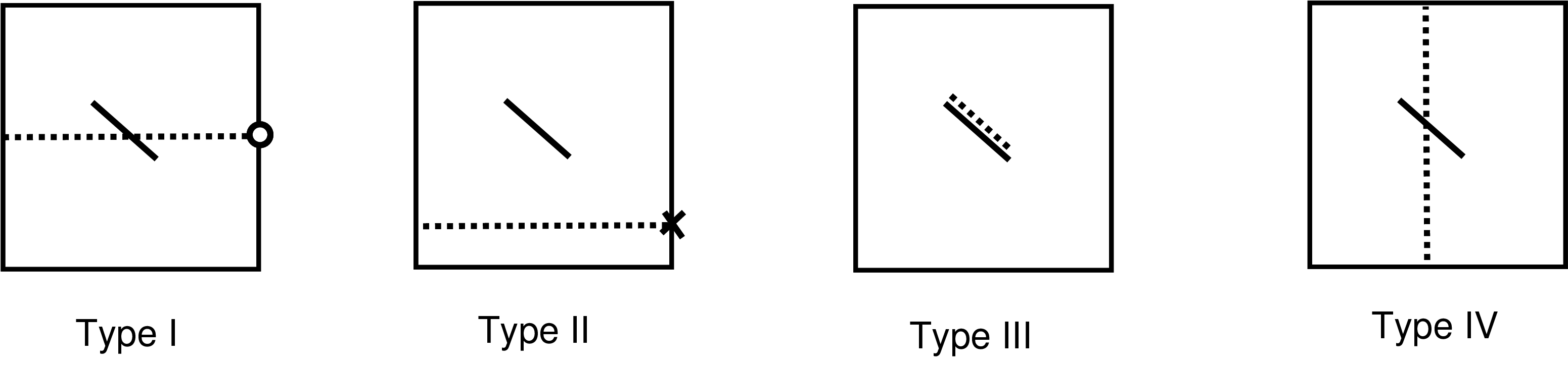}
  \caption{Here are the four possible types of image curves in a cap isomorphic to $Bl_1(\ponepone)$.  The solid line represents the exceptional divisor, the dotted line represents the image curve.  A map of Type I can be a multiple cover of the image curve.  All other maps are
  degree 1.}
\end{figure}

\subsection{Proof of Theorem \ref{BlowupTheorem}} \label{prf2}

\begin{proof}
The geometry of the spaces of maps to $S_1, ..., S_n$ are the same as for the case of $\ponepone$, so we need only compute the contribution of the caps.  Without loss of generality here we consider the cap $S_0$.

By the dimension analysis of Section \ref{sec:BlowupCaps}, only the moduli
space associated to the Types I-IV need be considered.
The moduli spaces $M^0_{II}$, $M^0_{III}$, and $M^0_{IV}$ are simply points.
The geometrically interesting case is
Type I with moduli isomorphic to the space of degree $a$ maps
$$\overline{M}_{0,0}(\proj^1/\infty, a)_{(a)}$$
to $\proj^1$ relative to $\infty\in \proj^1$ and fully ramified along
the relative divisor.
Since the normal bundle of the unique line in the class $L_1-E$
is of degree $-1$, we can write the integral against the
virtual class as
\begin{equation}\label{kdd4}
\int_{[M^0_I]^{vir}} 1 =
\int_{\overline{M}_{0,0}(\proj^1/\infty, a)_{(a)}} c_{\text{top}}\left(
R\pi_*
f^*(\mathcal{O}_{\proj^1}(-1))\right)
\end{equation}
where $\pi$ and $f$ are the standard maps associated to the
universal curve over moduli
$$\pi: \mathcal{C} \rightarrow \overline{M}_{0,0}(\proj^1/\infty, a)_{(a)},
\ \ \ \ f: \mathcal{C} \rightarrow \proj^1\ .$$
Fortunately, the integral \eqref{kdd4} has been calculated in
\cite{Brypan},
\begin{equation*}\int_{\overline{M}_{0,0}(\proj^1/\infty, a)_{(a)}} c_{\text{top}}\left(
R\pi_*
f^*(\mathcal{O}_{\proj^1}(-1))\right) =
 \frac{(-1)^{a - 1}} {a^2}.
\end{equation*}
The rest is bookkeeping.

The formula for $Bl_1(\ponepone)$ can be obtained with the same argument, beginning with the degeneration of $Bl_1(\ponepone)$ to a union of $n+1$ surfaces isomorphic to $\ponepone$ and a cap $S_{n+1}$, isomorphic to $Bl_1(\ponepone)$.  Alternatively, set the coefficients $\widehat{e}$ equal to zero in the formula for ${\mathsf Z}^{B}$.
\end{proof}

\subsection{Severi Degrees of $\proj^2$} \label{sec:P2}
Let $H,F_1,F_2 \in \text{Pic}(Bl_2(\proj^2))$ be the hyperplane class
and the two exceptional classes respectively.
Let $L_1,L_2, E \in \text{Pic}(Bl_1 (\ponepone))$ be
 the pullbacks of the two generators
of the Picard group of $\ponepone$ and the exceptional class.
There is an isomorphism
$$\varphi: Bl_2 (\proj^2) \stackrel{\sim}{\rightarrow} Bl_1 (\ponepone)$$
under which
$$\varphi(H) = L_1 + L_2 -E,\ \ \ \varphi(F_1) = L_1 - E, \ \ \
\varphi(F_2) = L_2 - E\ .$$

The generating function for Severi degrees of $\proj^2$
is
$$\mathsf{Z}^{\proj^2}=1+\sum_{g\in \mathbb{Z}} u^{g-1}
\sum_{d > 0} \  N_{g,d}^{\bullet} \frac{t^{3d+g-1}}{(3d+g-1)!} \ Q^d\ .
$$
The terms of $\mathsf{Z}^{\proj^2}$ are not
a subset of the terms of $\mathsf{Z}^{Bl_2(\proj^2)}$.
For example, the coefficient of $u^{-2} Q^2 t^{4}$ in $\mathsf{Z}^{\proj^2}$ is $3/24$, as there are three pairs of lines through 4 fixed points in $\proj^2$.  The coefficient of $u^{-2} Q^2 t^{4}$ in $\mathsf{Z}^{Bl_2(\proj^2)}$ is $5/24$, as in addition to the three pairs of lines, a smooth conic meeting the divisor $F_1$, union $F_1$, and a smooth conic meeting the divisor $F_2$, union $F_2$, also appear.

However, the \emph{connected} Severi degrees of $\proj^2$ and
$Bl_2(\proj^2)$ are directly related.  For any surface $S$, define
 $$\mathsf{Y}^S= \log \mathsf{Z}^S\ .$$
We view $\mathsf{Y}^S$ as
 the generating function for the connected Severi degrees of $S$.

\begin{Lemma}
The generating function $\mathsf{Y}^{\proj^2}$ appears as a
summand of $\mathsf{Y}^{Bl_2(\proj^2)}.$
\end{Lemma}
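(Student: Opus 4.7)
My plan is to decompose $\mathsf{Y}^{Bl_2(\proj^2)}$ according to the effective curve classes of its connected constituents, and then to identify $\mathsf{Y}^{\proj^2}$ with the subseries supported on the classes $dH$.

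First I would classify the connected effective classes on $Bl_2(\proj^2)$. They split into two families: the two exceptional divisors $F_1, F_2$ viewed as rigid rational curves, and classes of the form $dH - e_1 F_1 - e_2 F_2$ with $d \geq 1$ and $e_i \geq 0$, which are represented by proper transforms of degree $d$ curves in $\proj^2$ passing through $p_i$ with multiplicity $e_i$. Because $\mathsf{Y}^{Bl_2(\proj^2)}$ collects connected Severi contributions one effective class at a time, it decomposes as a sum indexed by these classes.

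Second I would isolate the subsum coming from classes $dH$ (so $e_1 = e_2 = 0$). A connected curve of class $dH$ has trivial intersection with each $F_i$, so it pushes down to a degree $d$ curve in $\proj^2$ disjoint from $p_1, p_2$; moreover $\int_{dH} c_1(Bl_2(\proj^2)) + g - 1 = 3d + g - 1$, matching the Severi dimension on $\proj^2$. The key identification is that the connected Severi degree of class $dH$ in $Bl_2(\proj^2)$ equals the connected Severi degree of degree $d$ in $\proj^2$. This follows because for a general configuration of $n = 3d + g - 1$ point conditions in $\proj^2$ (disjoint from $p_1, p_2$), passage through the additional point $p_i$ is a codimension-one condition on the Severi variety, so every counted curve generically avoids $p_1, p_2$. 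Blow-up and blow-down then furnish a bijection between the two sets of counted curves with compatible virtual classes. Consequently the $(e_1, e_2) = (0,0)$ subsum equals $\mathsf{Y}^{\proj^2}$ term-by-term, while the remainder -- coming from classes with $e_1 + e_2 > 0$ and from the $F_1, F_2$ bubble contributions -- is a well-defined additive correction.

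I expect the only real obstacle to be bookkeeping: confirming that each monomial $u^{g-1} Q^d t^{3d+g-1}$ of $\mathsf{Y}^{\proj^2}$ lands in the same monomial of $\mathsf{Y}^{Bl_2(\proj^2)}$ as the corresponding class-$dH$ contribution, and that the generating-series conventions for $\mathsf{Z}^{Bl_2(\proj^2)}$ (in particular how the exceptional classes $F_1, F_2$ are encoded, as visible in the $5/24$ versus $3/24$ example) isolate precisely this subsum. Once those conventions are pinned down, the additive splitting of $\mathsf{Y}^{Bl_2(\proj^2)} = \mathsf{Y}^{\proj^2} + (\text{correction})$ is immediate from the class decomposition above.
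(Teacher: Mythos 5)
Your proposal is correct and takes essentially the same route as the paper: both isolate the class-$dH$ terms of $\mathsf{Y}^{Bl_2(\proj^2)}$ and match them with $\mathsf{Y}^{\proj^2}$ by observing that the (enumerative, finite) set of nodal solutions through general points in $\proj^2$ avoids the two blow-up centers, so strict transform gives a bijection of counted curves. The paper's proof is in fact terser than yours, omitting the dimension check and the explicit decomposition by effective classes that you spell out.
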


\begin{proof}
We must show  the coefficient $N_{g,d}^{\proj^2}$ of $u^{g-1} Q^d t^{n}$ in $\mathsf{Y}^{\proj^2}$ equals the coefficient $N_{g,d}^{Bl_2(\proj^2)}$ of the same term in $\mathsf{Y}^{Bl_2(\proj^2)}$.
Let $p,q\in \proj^2$ be the centers of the blow-up for $Bl_2(\proj^2)$.
The Severi degrees of $\proj^2$
are enumerative.
The nodal solutions are the only stable maps which
pass through general point conditions, and they do {\em not} pass through
$p$ and $q$.
We conclude the strict transforms of the nodal solutions
are the only relevant stable maps
for the connected Severi problem on $Bl_2(\proj^2)$.
\end{proof}

More precisely, under the substitutions
$$Q = \frac{Q_1 Q_2}{E},\  F_1 = \frac{Q_1}{E}, \ \mathrm{and} \ F_2 = \frac{Q_2}{E},$$
we have
$\mathsf{Y}^{Bl_2(\proj^2)}=\mathsf{Y}^{Bl_1(\ponepone)}$.
Therefore computing
$$\mathsf{Z}^{Bl_1 \ponepone} = e^{tQ_2/u} \big\langle\ v \ | \
Q_1^{|\cdot|}
\exp\big(t\mathsf{M}_S(u,Q_2)\big)
\ | \ \widehat{\mathsf{w}}\ \big\rangle$$
via the Fock space formalism and then taking log of the resulting generating function computes the connected Severi degrees of $Bl_2(\proj^2)$, and hence
the connected Severi degrees of $\proj^2$.

\section{The surface $E \times \pone$}

We consider here the Gromov-Witten invariants
 for the surface $E \times \pone$, where $E$ is a smooth curve of genus 1.
By deformation invariance, the Gromov-Witten invariants for $E \times \pone$ are equal to those for $C \times \pone$ where $C$ is a nodal rational curve. The degeneration formula immediately yields the
following result in terms of the trace.

\begin{Theorem}\label{3333}
${\mathsf Z}^{E \times \pone} = e^{tQ_2/u} \ \text{\em tr}\left(
Q_1^{|\cdot|}
\exp\big(t\mathsf{M}_S(u,Q_2)\right)$\ .
\end{Theorem}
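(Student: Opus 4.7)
The plan is to reduce Theorem \ref{3333} to the argument already used for Theorem \ref{MainTheorem}, the only essential change being that a chain degeneration is replaced by a cycle degeneration. First, by deformation invariance of Gromov--Witten invariants, I would replace the smooth elliptic curve $E$ by a nodal rational curve $E_0$ (the union of $\pone$ with two distinct points $0$ and $\infty$ identified). Then $E\times\pone$ deforms to $E_0\times\pone$, and the normalization of $E_0\times\pone$ is $\ponepone$ together with two disjoint divisors $\{0\}\times\pone$ and $\{\infty\}\times\pone$ which are glued cyclically.

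Second, I would degenerate this normalization further in the horizontal direction, exactly as in Section \ref{poneponeDegen}, but now into a cycle rather than a chain of $n=2d_1+2d_2+g-1$ copies of $\ponepone$, distributing the $n$ incidence conditions one per component. Crucially, there are no cap components $S_0,S_{n+1}$: every component plays the role of a middle component, and the relative condition $\eta_n$ at the last divisor is identified with the relative condition $\eta_0^*$ at the first divisor. The geometric analysis of Section \ref{middle11} then applies verbatim to each component, and the Type A, B, C configurations correspond, as in the proof of Theorem \ref{MainTheorem}, to the three summands of the operator
\[
\mathsf{N}_S \;=\; \sum_{k>0}\alpha_{k}[\ppp]\alpha_{-k}[\ppp] \;+\; Q_2\sum_{|\mu|=|\nu|>0}u^{\ell(\mu)-1}\alpha_\nu[\uuu]\alpha_{-\mu}[\uuu] \;+\; \frac{Q_2}{u}.
\]

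Third, I would apply the degeneration formula of \cite{IP,LR,L}. The sum over intermediate relative conditions $\eta_0,\ldots,\eta_{n-1}$ with the cyclic identification $\eta_n=\eta_0^*$, weighted by the usual combinatorial factors $\frac{m(\eta_i)}{|\Aut(\eta_i)|}$, is precisely the definition of the trace on $\cF[\proj^1]$ in the inner product \eqref{inner_prod}; the factors $\frac{u^{-\ell(\mu)}}{\zz(\mu)}\frac{u^{-\ell(\nu)}}{\zz(\nu)}$ in the dual basis vectors combine with $m(\eta)$ to reproduce exactly the contribution of a node between two relative divisors with matched partitions. Consequently the generating series takes the form
\[
\mathsf{Z}^{E\times\pone} \;=\; \sum_{d_1,d_2,g}Q_1^{d_1}Q_2^{d_2}u^{g-1}\frac{t^n}{n!}\,\text{tr}\!\left(Q_1^{|\cdot|}\mathsf{N}_S^{\,n}\right)\Big|_{\text{deg }d_2\text{ term in }Q_2},
\]
which after exponentiating the constant $\frac{Q_2}{u}$ out of the trace gives the claimed identity.

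The main step is verifying that the cyclic identification of relative conditions in the degeneration formula really does produce the Fock-space trace with the correct normalization; this is a routine but attentive bookkeeping exercise, combining the node-gluing factors $\frac{m(\eta)}{|\Aut(\eta)|}$ with the inner product factors $\frac{u^{-\ell}}{\zz}$ on both sides of the identified divisor. Everything else is a direct transcription of the chain argument used to prove Theorem \ref{MainTheorem}.
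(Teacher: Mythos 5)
Your proposal is correct and follows the paper's own (very terse) argument exactly: deformation invariance replaces $E$ by a nodal rational curve, the resulting cyclic degeneration into copies of $\ponepone$ is analyzed component by component exactly as for the middle components in the proof of Theorem \ref{MainTheorem}, and the cyclic matching of relative conditions via the gluing factors $\frac{m(\eta)}{|\Aut(\eta)|}$ is precisely the Fock space trace. The only slip is the count of point conditions: for $E\times\pone$ the virtual dimension is $n=2d_2+g-1$ rather than $2d_1+2d_2+g-1$, since $c_1(E\times\pone)$ pairs trivially with the horizontal class $[E\times \mathrm{pt}]$; this does not affect the structure of the argument.
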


A disconnected curve $C$ of genus 1 can have connected components of arbitrary genus (since higher genus
components can be balanced with genus 0 components).
If all of the connected components of $C$ have genus 1,
we call $C$ a disconnected curve of pure genus 1,
and we call the pure genus 1 Gromov-Witten invariants of $E \times \pone$ those which count
 maps from curves of pure genus 1.

The genus 1 invariants are obtained by extracting the $u=0$ coefficient  $\mathsf{Z}^{E \times \pone}$.
The pure genus 1 Gromov-Witten invariants of $E \times \pone$ arise  from the
$u=0$ coefficient of $\text{tr}\left(
Q_1^{|\cdot|}
\exp\big(t\mathsf{M}_S(u,Q_2)\right)$. The prefactor $e^{tQ_2/u}$
in Theorem \ref{3333} contributes
only to impure genus 1 invariants.
Let $\mathsf{Z}_1^{E\times \pone}$ be the partition function for pure genus 1 invariants.
We conclude
$$\mathsf{Z}_1^{E \times \pone} =
\text{tr}\left(
Q_1^{|\cdot|}
\exp\big(t\mathsf{M}_S(0,Q_2)\right)\ .$$
The specialization $\mathsf{M}_S(0,Q)$ has a simple formula,$$
\mathsf{M}_S(0,Q) = \sum_{k>0} \alpha_k[\ppp] \alpha_{-k}[\ppp] + Q\sum_{k>0} \alpha_k[\uuu]\alpha_{-k}[\uuu].$$

\begin{Proposition}
The eigenvalues  of $\mathsf{M}_S(0,Q)$ on the subspace of energy $s$ are
$$\{ \ \ (|\mu|- |\nu|) \sqrt{Q} \ \ \}_{|\mu|+|\nu|=s}\ .$$
\end{Proposition}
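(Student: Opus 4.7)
The plan is to exploit the only nontrivial commutator $[\alpha_k[\ppp],\alpha_{-k'}[\uuu]] = k'\delta_{k,k'}$. Writing $\mathsf{M}_S(0,Q) = A + B$ with $A = \sum_k \alpha_{-k}[\ppp]\alpha_k[\ppp]$ and $B = Q\sum_k\alpha_{-k}[\uuu]\alpha_k[\uuu]$, a direct calculation shows that $A$ acts on a state by replacing one creation $\alpha_{-k}[\uuu]$ (when present) by $\alpha_{-k}[\ppp]$ with a combinatorial weight, and $B$ does the reverse. Hence $\mathsf{M}_S(0,Q)$ preserves the unordered multiset of parts $\mu\cup\nu$, and the energy-$s$ subspace decomposes as $\bigoplus_{\lambda\vdash s} V_\lambda$ with $V_\lambda = \mathrm{span}\{|\mu,\nu\rangle : \mu\cup\nu = \lambda\}$.

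Because creations of different frequencies commute, for $\lambda$ with distinct parts $k_i$ of multiplicity $m_i$ the block $V_\lambda$ factors as a tensor product $\bigotimes_i W_{k_i,m_i}$, where $W_{k,m}$ has basis $\phi_j = (\alpha_{-k}[\uuu])^j(\alpha_{-k}[\ppp])^{m-j}\vac$ for $j=0,\ldots,m$. A short commutator computation yields
\begin{equation*}
\alpha_{-k}[\ppp]\alpha_k[\ppp]\phi_j = jk\,\phi_{j-1}, \qquad \alpha_{-k}[\uuu]\alpha_k[\uuu]\phi_j = (m-j)k\,\phi_{j+1}.
\end{equation*}
Setting $e = \alpha_{-k}[\ppp]\alpha_k[\ppp]/k$, $f = \alpha_{-k}[\uuu]\alpha_k[\uuu]/k$, and $h = [e,f]$, one checks $[h,e] = 2e$, $[h,f] = -2f$, and $h\phi_j = (m-2j)\phi_j$, identifying $W_{k,m}$ as the standard $(m+1)$-dimensional irreducible $\mathfrak{sl}_2$-module.

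The restriction of $\mathsf{M}_S(0,Q)$ to $W_{k,m}$ equals $k(e + Qf)$. The diagonal change of basis $\phi_j \mapsto (\sqrt{Q})^j\phi_j$ conjugates $e+Qf$ to $\sqrt{Q}(e+f)$. Since $e+f$ is conjugate to $h$ in $\mathfrak{sl}_2$, it has the same spectrum on any finite-dimensional irrep, namely the weights $\{m, m-2,\ldots,-m\}$. Hence the eigenvalues of $\mathsf{M}_S(0,Q)$ on $W_{k,m}$ are $\{k\sqrt{Q}(m-2j)\}_{j=0}^m$.

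By the tensor product decomposition, the eigenvalues on $V_\lambda$ are the sums $\sqrt{Q}\sum_i k_i(m_i-2j_i)$ for $(j_i)\in\prod_i\{0,\ldots,m_i\}$. Each tuple $(j_i)$ corresponds bijectively to an ordered splitting $\lambda = \mu\cup\nu$ in which $\mu$ collects $j_i$ copies of $k_i$, and the resulting eigenvalue is $(|\nu|-|\mu|)\sqrt{Q}$; the involution $j_i \leftrightarrow m_i-j_i$ shows the multiset of eigenvalues on $V_\lambda$ equals $\{(|\mu|-|\nu|)\sqrt{Q} : \mu\cup\nu = \lambda\}$. Summing over $\lambda\vdash s$ yields the claimed spectrum. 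The main step requiring care is the $\mathfrak{sl}_2$-triple identification together with the standard fact that $e+f$ has the same spectrum as $h$ on irreducibles; the remaining steps are bookkeeping.
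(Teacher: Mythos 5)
Your proof is correct and follows essentially the same route as the paper: decompose the energy-$s$ subspace into blocks indexed by $\lambda = \mu\cup\nu$, factor each block as a tensor product over distinct part sizes, and identify each factor as the $(m+1)$-dimensional irreducible $\mathfrak{sl}_2$-module on which the operator acts as $k(e+Qf)$. The only cosmetic difference is at the last step: you extract the spectrum of $e+Qf$ by a diagonal conjugation to $\sqrt{Q}(e+f)$ and the conjugacy of $e+f$ to $h$, whereas the paper diagonalizes $e+Qf$ on the standard representation and passes to the induced basis of $\mathrm{Sym}^n(V)$ --- two equivalent ways of using that $e+Qf$ is regular semisimple.
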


\begin{proof}
For convenience, let $\mathsf{M}=\mathsf{M}_S(0,Q)$.
Fix any partition $p$, and let $W_p$ be the subspace spanned by the vectors
$$\{\ |\mu, \nu \rangle\ | \ \mu \cup \nu = p\ \}\ ,$$
corresponding to splittings of $p$ into two subpartitions $\mu$ and $\nu$.
The subspace $W_p$ is invariant under $\mathsf{M}$.  We begin by computing the eigenvalues of
$\mathsf{M}$ on $W_p$.

Let us write $p$ in the frequency representation as
$p = (1^{e_1}, 2^{e_2}, \ldots)$.
Let
$\mu = (1^{l_1}, 2^{l_2}, \ldots)$
and
$\nu = (1^{r_1}, 2^{r_2}, \ldots)$
satisify $\mu \cup \nu = p$, or equivalently, $\l_i + r_i = e_i$.

\begin{multline*}
\mathsf{M} | (1^{l_1}, 2^{l_2},\ldots ) , (1^{r_1}, 2^{r_2},\ldots ) \rangle \\
= \sum_{k}
(k)(l_k) |(1^{l_1}, 2^{l_2},\ldots,k^{l_{k}-1}, (k+1)^{l_{k+1}},\ldots ) , (1^{r_1}, 2^{r_2},\ldots,k^{r_{k}+1}, (k+1)^{r_{k+1}},\ldots) \rangle \\
\ \ \ \ \ \ \ \ + Q (k)(r_k) |(1^{l_1}, 2^{l_2},\ldots,k^{l_{k}+1}, (k+1)^{l_{k+1}},\ldots ) , (1^{r_1}, 2^{r_2},
 \ldots, k^{r_{k}-1}, (k+1)^{r_{k+1}},\ldots) \rangle
\end{multline*}
In fact, $W_p$ is naturally a tensor product
$$W_p = \bigotimes_{k | e_k \neq 0} W_{p_k},$$
where $p_k=(k^{e_k})$ is the subpartition of $p$ consisting of all parts of size $k$.
Let
$$\mathsf{M}_k = \alpha_k[\ppp] \alpha_{-k}[\ppp] + Q\sum_{k>0} \alpha_k[\uuu]\alpha_{-k}[\uuu]\ .$$
$\mathsf{M}_k$ acts nontrivially only on $W_{p_k}$, and we can write $\mathsf{M}= \sum_k \mathsf{M}_k$.

If we choose for each $k$ an eigenvector $v_k \in W_{p_k}$ with eigenvalue $\lambda_k$ for the operator $\mathsf{M}_k$ acting on $W_{p_k}$, then
$\bigotimes_{k | e_k \neq 0} v_k$
is an eigenvector of $\mathsf{M}$ acting on $W_p$  with eigenvalue
$\sum_{k | e_k \neq 0} \lambda_k$.
In the basis of $W_{p_k}$ given by the set of vectors $\{ |\mu, \nu \rangle \}$,
$\mathsf{M}_k = k A_{e_k+1}$, where $A_{e_k+1}$ is the matrix defined in Proposition \ref{BestEigenEverQ}. By
 Proposition \ref{BestEigenEverQ} below, the eigenvalues of $\mathsf{M}_k$ on $W_{p_k}$
are $$\{ k(e_k) \rt,\   k(e_k - 2) \rt,\  \ldots,\  k(-e_k) \rt \}.$$

We conclude  the eigenvalues of $\mathsf{M}$ on $W_p$ are $\{ (|\mu| - |\nu|) \rt \}$
 where $\mu, \nu$ run over all partitions satisfying $\mu + \nu = p$.
Finally, the eigenvalues of $M$ on the subspace of energy $s$ are $\{ (|\mu| - |\nu|) \rt \}$
 where $\mu, \nu$ run over all partitions satisfying $|\mu| + |\nu| = s$.
\end{proof}

\begin{Proposition} \label{BestEigenEverQ}

Let
$A_n =
\begin{bmatrix}
0 & n &        &     &    &\\
Q & 0 &   n-1  &     &    & \\
  & 2Q &      0 & n-2 &   & \\
  &   & \ddots &\ddots &  & \\
  &   &        &    &     & \\
  &   &        &    &     & 1 \\
  &   &        &    & nQ  & 0 \\
\end{bmatrix} \ .
$

\vspace{10pt}
\noindent The eigenvalues of $A_n$ are $\{ n \sqrt{Q}, (n-2)\sqrt{Q}, \ldots , -n \sqrt{Q} \}$.

\end{Proposition}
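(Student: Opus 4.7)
The plan is to identify the matrix $A_n$, up to transposition (which preserves the spectrum), with the derivation action of the $2 \times 2$ matrix
$$C = \begin{pmatrix} 0 & 1 \\ Q & 0 \end{pmatrix}$$
on the $n$-th symmetric power $\mathrm{Sym}^n(\mathbb{C}^2)$ of its defining representation. With $e_1, e_2$ denoting the standard basis of $\mathbb{C}^2$, so that $C e_1 = Q\, e_2$ and $C e_2 = e_1$, I would work in the basis $\{ v_k = e_1^{n-k} e_2^k \}_{k=0}^{n}$ of $\mathrm{Sym}^n(\mathbb{C}^2)$. The Leibniz rule then gives
$$C \cdot v_k = k\, v_{k-1} + (n-k) Q\, v_{k+1},$$
and after reversing the basis order and taking the transpose (both of which preserve eigenvalues), the resulting tridiagonal matrix matches $A_n$ entry-by-entry.

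With this identification in place, it suffices to read off the eigenvalues of $C$ acting on $\mathrm{Sym}^n(\mathbb{C}^2)$. The characteristic polynomial of $C$ is $\lambda^2 - Q$, with eigenvectors $w_\pm = e_1 \pm \sqrt{Q}\, e_2$ corresponding to eigenvalues $\pm \sqrt{Q}$. Because $C$ acts as a derivation on $\mathrm{Sym}^n(\mathbb{C}^2)$, the $n+1$ monomials
$$w_+^{\, j}\, w_-^{\, n-j}, \qquad j = 0, 1, \ldots, n,$$
form a basis of eigenvectors, with respective eigenvalues
$$j \sqrt{Q} + (n-j)(-\sqrt{Q}) = (2j - n)\sqrt{Q}.$$
As $j$ varies in $\{0, 1, \ldots, n\}$, these values exhaust the claimed set $\{n\sqrt{Q},\, (n-2)\sqrt{Q},\, \ldots,\, -n\sqrt{Q}\}$.

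The only non-routine step is the initial matching: one must carefully track basis orderings to confirm that, with the correct orientation, the matrix of $C$ on $\mathrm{Sym}^n(\mathbb{C}^2)$ really coincides with $A_n^T$ (or equivalently, $A_n$ after reversing the basis). Once this bookkeeping is settled, the eigenvalue calculation is a one-line consequence of diagonalizing a $2 \times 2$ matrix and invoking the derivation property of the symmetric power construction — which is what causes the eigenvalues of $A_n$ to collapse into an equally spaced arithmetic progression.
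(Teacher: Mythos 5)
Your proof is correct and is essentially the paper's argument: both realize $A_n$ as the induced (derivation) action of the $2\times 2$ matrix $e+Qf=\begin{pmatrix}0&1\\ Q&0\end{pmatrix}$ on $\mathrm{Sym}^n(\mathbb{C}^2)$ and then diagonalize using the symmetric powers of its eigenvectors $e_1\pm\sqrt{Q}\,e_2$. The paper merely phrases this in $\mathfrak{sl}_2$-representation language, while you work directly with the Leibniz rule and are, if anything, more careful about the basis-ordering/transposition bookkeeping.
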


\begin{proof}
The Lie algebra $\mathfrak{sl}_2$ is generated by $e,f$ and $h$ satisfying the commutation relations $[h,e] = 2e$, $[h,f] = -2f$, and $[e,f] = h$.  Let $x$ and $y$ denote the eigenvectors of $h$ with eigenvalues $1$ and $-1$ respectively.
The standard representation $V$ of $\sl2$ is the 2-dimensional representation given in the basis $\{ x, y \}$ by
$$
e = \begin{bmatrix} 0 & 1 \\ 0 & 0 \end{bmatrix}, f = \begin{bmatrix} 0 & 0 \\ 1 & 0 \end{bmatrix},
h = \begin{bmatrix} 1 & 0 \\ 0 & -1 \end{bmatrix}.
$$
There is one irreducible representation $V_n = \text{Sym}^n(V)$ of dimension $n+1$ for each $n \geq 0$.
The action of $\sl2$ on $V$ induces an action on each $V_n$, and the basis $\{ x,y \}$ of $V$ gives a natural basis $\{ x^n, x^{n-1} y, \ldots , y^n\}$ of $V_n$.
These are eigenvectors for $h$ with eigenvalues $\{n, n-2, ..., -n\}$ respectively.
In the above basis, the induced action of $e$ on $V_n$ is
$$
\begin{bmatrix}
0 & n &        &     &    &\\
  & 0 &   n-1  &     &    & \\
  &   &      0 & n-2 &    & \\
  &   & \ddots &\ddots &  & \\
  &   &        &    &     & \\
  &   &        &    &     & 1      \\
  &   &        &    &     & 0      \\
\end{bmatrix}\ ,
$$
and the action of $f$ on $V_n$ is
$$
\begin{bmatrix}
0 &   &        &     &    &\\
1 & 0 &     &     &    & \\
  & 2 &      0 &   &    & \\
  &   & \ddots &\ddots &  & \\
  &   &        &    &     & \\
  &   &        &    &     &       \\
  &   &        &    &  n  & 0      \\
\end{bmatrix}
$$
In the chosen basis, $e + Qf$ equals the matrix $A_n$.

The operator $e+Qf$ acts on $V$ as
$$
\begin{bmatrix}
0 & 1 \\
Q & 0 \\
\end{bmatrix}
$$
with eigenvectors $z = x+ \rt y$ and $w = x- \rt y$ with eigenvalue  $\rt$ and $-\rt$ respectively.
The basis $\{z,w\}$ of $V$ induces the natural basis $\{ z^n, z^{n-1}w, ..., w^n \}$ for
$V_n = \text{Sym}^n(V)$. In this  basis,
$$
e+Qf =
\begin{bmatrix}
n \rt &     &        & \\
  & (n-2) \rt &        & \\
  &     & \ddots & \\
  &     &        & (-n) \rt \\
\end{bmatrix}.
$$
We conclude that the eigenvalues of $A_n$ are $\{ n \rt , (n-2) \rt ,
\ldots, -n \rt \}$, as desired.

\end{proof}

As a consequence of Proposition 1, we find
\begin{equation}\label{bb34}
\mathsf{Z}^{E\times \pone}_1 =
\text{tr}\big(
Q_1^{|\cdot|}
\exp\big(t\mathsf{M}_S(0,Q_2)\big)\big)=
\sum_{\mu,\nu}
Q_1^{|\mu|+|\nu|}
e^{(|\mu|-|\nu|)t \sqrt{Q_2}}
\end{equation}
where the sum is over all pairs of partitions $\mu$ and $\nu$
(of possibly different sizes).
Formula \eqref{bb34}
is the Severi analogue of \eqref{b56}.

\section{Further directions}
\subsection{Rationality}
A rationality result holds for the following generating functions of
Severi degrees of $\proj^1 \times \proj^1$. Let
$$\mathsf{R}_{0}=1+ \sum_{g \in \Z} u^{g-1} \sum_{d_2 >0} \  N_{g,(0,d_2)}^{\bullet}
 \  Q_2^{d_2},$$
and, for $a>0$, let
$$\mathsf{R}_{a}=\sum_{g \in \Z} u^{g-1} \sum_{d_2\geq 0} \  N_{g,(a,d_2)}^{\bullet}  \ Q_1^{a} Q_2^{d_2}\ .
$$

\begin{Proposition}
For each $a \geq 0$, we have
$$Q_1^{-a} \mathsf{R}_a \in \mathbb{Q}(u, Q_2).$$
\end{Proposition}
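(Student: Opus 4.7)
My plan is to realize $Q_1^{-a}\mathsf{R}_a$ as a matrix element of the resolvent $(1 - \mathsf{M}_S')^{-1}$ on a finite-dimensional $\mathsf{M}_S$-invariant subspace $V_a \subset \cF[\proj^1]$, where $\mathsf{M}_S' = \mathsf{M}_S(u, Q_2) + (Q_2/u)\cdot I$ is the shift that absorbs the scalar prefactor $e^{tQ_2/u}$ appearing in Theorem \ref{MainTheorem}.

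Since the scalar $Q_2/u$ commutes with $\mathsf{M}_S$, Theorem \ref{MainTheorem} becomes $\mathsf{Z}^{\ponepone} = \langle \mathsf{v} | Q_1^{|\cdot|} \exp(t\mathsf{M}_S'(u, Q_2))| \mathsf{v}\rangle$. Because $\mathsf{M}_S$ preserves the energy $|\mu|+|\nu|$, extracting the coefficient of $Q_1^a$ reduces to the single diagonal term $\langle (1^a), \emptyset | \exp(t\mathsf{M}_S')|(1^a), \emptyset\rangle$. Comparing Taylor coefficients in $t$ with the definition of $\mathsf{Z}^{\ponepone}$ gives
$$\langle (1^a), \emptyset | (\mathsf{M}_S')^n | (1^a), \emptyset\rangle = \sum_{\substack{g, d_2\\ 2a+2d_2+g-1 = n}} N^\bullet_{g, (a, d_2)}\, u^{g-1}Q_2^{d_2}\ .$$
Since each monomial $u^{g-1}Q_2^{d_2}$ appears on the right for a unique $n$, summing over $n \geq 0$ yields
$$Q_1^{-a}\mathsf{R}_a = \sum_{n \geq 0}\langle (1^a), \emptyset | (\mathsf{M}_S')^n | (1^a), \emptyset\rangle\ ,$$
uniformly for $a \geq 0$ (the $n = 0$ contribution reproduces the constant $1$ in the definition of $\mathsf{R}_0$).

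Let $V_a \subset \cF[\proj^1]$ be the finite-dimensional subspace spanned by the basis vectors $|\mu, \nu\rangle$ with $|\mu|+|\nu| = a$. It is preserved by $\mathsf{M}_S'$, which restricts to a matrix $\mathsf{M}_a$ with entries in $\mathbb{Q}[u^{\pm 1}, Q_2]$. The crucial structural input is the nilpotence of $\mathsf{M}_a|_{Q_2 = 0}$: this restriction equals $\sum_{k > 0}\alpha_{-k}[\ppp]\alpha_k[\ppp]|_{V_a}$, which converts a $\uuu$-part of $\mu$ into a $\ppp$-part of $\nu$ and hence strictly decreases $\ell(\mu)$; since $\ell(\mu) \leq a$ on $V_a$, one has $(\mathsf{M}_a|_{Q_2 = 0})^{a+1} = 0$. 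Consequently $\det(1 - \mathsf{M}_a) \in \mathbb{Q}[u^{\pm 1}, Q_2]$ has constant term $1$ in $Q_2$ and so is nonzero; by Cramer's rule, $(1 - \mathsf{M}_a)^{-1}$ is a well-defined matrix over $\mathbb{Q}(u, Q_2)$. The same nilpotence ensures that $\sum_{n \geq 0}\mathsf{M}_a^n$ converges to this resolvent as an endomorphism of $V_a \otimes \mathbb{Q}(u)[[Q_2]]$. Combining with the previous paragraph,
$$Q_1^{-a}\mathsf{R}_a = \langle (1^a), \emptyset | (1 - \mathsf{M}_a(u, Q_2))^{-1}|(1^a), \emptyset\rangle \in \mathbb{Q}(u, Q_2)\ .$$

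The anticipated main obstacle is routine bookkeeping: identifying the Taylor-coefficient sum on the Fock-space side with the resolvent matrix element and tracking the inner-product normalizations. No additional geometric input beyond energy conservation and the explicit action of the first summand of $\mathsf{M}_S$ on $V_a$ is required.
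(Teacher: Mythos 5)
Your proof is correct and takes essentially the same route as the paper: both use Theorem \ref{MainTheorem} to identify $Q_1^{-a}\mathsf{R}_a$ with the matrix element $\langle (1^a),\emptyset \,|\,(1-\mathsf{N}_S)^{-1}\,|\,(1^a),\emptyset\rangle$ (your $\mathsf{M}_S'=\mathsf{M}_S+Q_2/u$ is exactly the paper's $\mathsf{N}_S$) restricted to the finite-dimensional energy-$a$ subspace, and then invert by cofactors/Cramer's rule. Your observation that the $Q_2=0$ specialization is nilpotent, so the determinant has constant term $1$ and the geometric series converges $Q_2$-adically, supplies a justification the paper leaves implicit.
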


\begin{proof}
Recall $\mathsf{N}_S$ defined in \eqref{jjjj}.
By Theorem \ref{MainTheorem},
$$
Q_1^{-a} {\mathsf{R}}_a= \big\langle\ 1^{a},\emptyset \ | \
\big(1+\mathsf{N}_S(u,Q_2)+\mathsf{N}^2_S(u,Q_2)+...\big)
\ | \ 1^{a},\emptyset \big\rangle.
$$
The right hand side is $\frac{1}{a!}$ times the coefficient of $| \ \emptyset,1^{a} \big\rangle$ in
$$
\big(1-\mathsf{N}_S(u,Q_2)\big)^{-1}
\ | \ 1^{a},\emptyset \big\rangle \ .
$$
The inverse of $1-\mathsf{N}_S(u,Q_2)$ can be computed
by the cofactor expansion.
%
% hence proving that $Q_1^{-a} \widetilde{\mathsf{Z_a}}^{\proj^1\times \proj^1}$% is a rational function in $Q_2$ and $u$, as desired.
\end{proof}

\subsection{Further operators} \label{fll4}
A point condition in the Severi problem corresponds to the operator
$\MM_S(Q,u)$ on $\mathcal{F}[\mathbb{P}^1]$.
In fact, an algebra of commuting operators containing $\MM_S(Q,u)$
is determined by appropriate descendent and relative insertions.
Commutation holds for geometric reasons:
when we degenerate we can assign conditions to the components of
the degeneration in any order.

\begin{figure}[h]
\centering
\includegraphics[width=6 in]{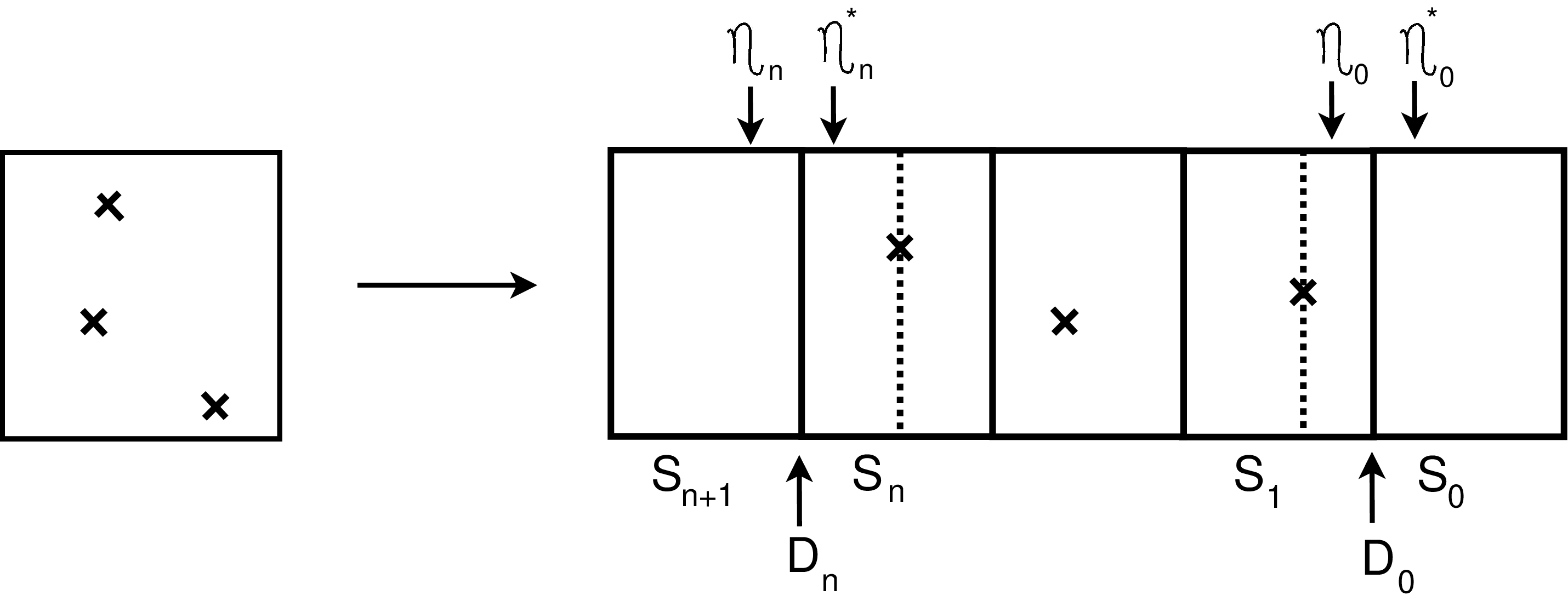}
\caption{We can insert tangency conditions, as in the second and fourth components here, where we require the map to vanish to second order along the line $(0,1)$ at a fixed point.}
\end{figure}

As an example, we consider the relative condition defined by
a single tangency to a vertical line (in class $(0,1)$)
at a fixed point.
The corresponding operator $\MM_F(Q,u)$ on the Fock space $\cF[\proj^1]$ is given by the following formula,
\begin{eqnarray*}
\MM_F(Q,u) &=& \sum_{k=|\mu|, \ell(\mu)=2} \alpha_{-k}[\ppp] \alpha_{\mu}[\ppp] + u \alpha_{-\mu}[\ppp] \alpha_{k}[\ppp]\\
&+& \sum_{|\mu|=|\nu \cup k |, \ k>0,\ \ell(\mu)+\ell(\nu)\geq 2} Q u^{\ell(\mu)-1}
\alpha_{-\mu}[\uuu] \alpha_{\nu}[\uuu] \alpha_k[\ppp]\\
&+&
\sum_{|\mu \cup k|=|\nu|,\ k>0,\  \ell(\mu)+\ell(\nu)\geq 2}
Q u^{\ell(\mu)}
\alpha_{-\mu}[\uuu] \alpha_{-k}[\ppp] \alpha_{\nu}[\uuu] \\
&+& \sum_{k>1} \frac{k^2-1}{12}u \alpha_{-k}[\ppp]\alpha_{k}[\ppp] \ .
\end{eqnarray*}
Here, $\nu$ can be the empty partition
in the second sum and $\mu$ can be empty in the
third sum.

We have computed $\MM_F(u,Q)$, analogous to $\MM_S(u,Q)+\frac{Q}{u}$, by
an analysis similar to the computation of Section \ref{middle11}.  In each component,
 we analyze the geometries corresponding to solutions of the following equation
$$2b + g + \ell(\sigma) + \ell(\sigma') = 3.$$
The most interesting coefficient $\frac{k^2-1}{12}$
arises from a genus 1 contribution but is determined from the
easier genus 0 contributions and the commutation
 $$[\MM_S(Q,u), \MM_F(Q,u)] = 0\ .$$
The operators
 corresponding to higher order tangencies will involve increasingly complicated
 terms. The commutation relation is a considerable
constraint.
We leave further exploration of the algebra of descendent
and relative operators for future research.

\section{Acknowledgements}
We thank M. Maydanskiy and A. Miller for helpful discussions.
Much of the work was done during visits of Y.C. to
IST Lisbon and
ETH Z\"urich.
R.P. was partially supported by the Swiss National Science
Foundation grant SNF 200021143274.

\vspace{+8 pt}
\noindent Department of Mathematics\\
\noindent Princeton University\\
\noindent Washington Rd\\
\noindent Princeton, NJ 08544\\
\noindent USA\\
\noindent yaim@math.princeton.edu\\

\vspace{+8 pt}
\noindent Departement Mathematik\\
\noindent ETH Z\"urich\\
\noindent R\:{a}mistrasse 101\\
\noindent 8092 Zurich\\
\noindent Switzerland\\
\noindent rahul@math.ethz.ch \\

\end{document}